\documentclass{amsart}
\title[]{Realizations of free actions \\ via their fixed point algebras}
\date{\today}

\usepackage{amsmath, amsthm, amssymb, bbm, mathtools, nicefrac}
\usepackage{xifthen}
\usepackage[english]{babel}
\usepackage[T1]{fontenc}
\usepackage[utf8]{inputenc}
\usepackage{lmodern}
\usepackage{tabularx}
\usepackage{marvosym}
\usepackage{wasysym}
\usepackage{graphicx}
\usepackage[bookmarks,bookmarksdepth=1]{hyperref}
\usepackage[dvipsnames]{xcolor}
\usepackage{eurosym}
\usepackage{microtype}		
\usepackage[english]{babel}
\usepackage{enumitem}
	\newlist{equivalence}{enumerate}{1}
	\setlist[equivalence]{label=(\alph*)}
\usepackage{xspace}
\usepackage[final]{pdfpages}
\usepackage{url}
\usepackage{tikz}
\usepackage{tabu}
\usetikzlibrary{topaths}
\tikzstyle{every picture}+=[remember picture,inner xsep=0,inner ysep=0.25ex]
\usepackage{booktabs}
\usepackage{fixme}
\synctex=1
\usepackage{a4wide}
\usepackage{aligned-overset}


\theoremstyle{plain}
	\newtheorem{theorem}{Theorem}[section]
	\newtheorem{lemma}[theorem]{Lemma}
	\newtheorem{corollary}[theorem]{Corollary}
	
\theoremstyle{definition}
	\newtheorem{defn}[theorem]{Definition}
\theoremstyle{remark}
	\newtheorem{remark}[theorem]{Remark}

\makeatletter
\newcommand{\setword}[2]{%
  \phantomsection
  #1\def\@currentlabel{\unexpanded{#1}}\label{#2}%
}

\makeatletter
\newcommand{\raisemath}[1]{\mathpalette{\raisem@th{#1}}}
\newcommand{\raisem@th}[3]{\raisebox{#1}{$#2#3$}}
\makeatother

\newcommand*{\C}{\mathbb{C}}		
\newcommand*{\tensor}{\otimes}		

\DeclarePairedDelimiter{\scal}{\langle}{\rangle}	

\DeclareMathOperator{\id}{id}		
\DeclareMathOperator{\Ad}{Ad}		

\DeclareMathOperator{\Aut}{Aut}

\newcommand{\cf}{\mbox{cf.}\xspace}				
\newcommand{\eg}{\mbox{e.\,g.}\xspace}			
\newcommand*{\ie}{\mbox{i.\,e.}\xspace}			
\newcommand{\wilog}{\mbox{w.\,l.\,o.\,g.}\xspace}	
\newcommand{\wrt}{\mbox{w.\,r.\,t.}\xspace}		
\newcommand*{\Star}{$^*$\nobreakdash}

\newcommand*{\hilb}{\mathfrak}		
\newcommand*{\alg}{\mathcal}		
\newcommand*{\hH}{\hilb H}			
\newcommand*{\one}{1}				
\newcommand*{\aA}{\alg{A}}			
\newcommand*{\aB}{\alg{B}} 			
\newcommand*{\aC}{\alg{C}} 			
\newcommand*{\aD}{\alg{D}} 			
\newcommand*{\End}{\mathcal L}	
\newcommand*{\Com}{\mathcal K}	
	
\newcommand*{\Mul}{\mathcal M}
\newcommand*{\Cont}{C}

\newcommand*{\fix}{\text{Fix}}
\DeclareMathOperator{\Irrep}{Irr} 			
\newcommand*{\acts}{\,.\,}
\DeclarePairedDelimiterXPP{\lprod}[2]
	{\,_{#1}}						
	{\langle}						
	{\rangle}						
	{}								
	{#2}							
\DeclarePairedDelimiterX{\rprod}[2]{\langle}{\rangle_{#1}}{#2}
\DeclarePairedDelimiterX{\ketbra}[2]{\lvert}{\rvert}{#1 \delimsize\rangle \delimsize\langle #2}
\DeclarePairedDelimiterX{\braket}[2]{\langle}{\rangle}{#1 \delimsize\vert #2}
\DeclarePairedDelimiterX{\matrixel}[3]{\langle}{\rangle}{#1 \delimsize\vert #2 \delimsize\vert #3}

\newcommand{\doubleitem}{%
  \begingroup
  \stepcounter{enumi}%
  \edef\tmp{\theenumi,}%
  \stepcounter{enumi}
  \edef\tmp{\endgroup\noexpand\item[\tmp\labelenumi]}%
  \tmp}
  
 \newcommand{\tripleitem}{%
 \begingroup \stepcounter{enumi}%
 \edef\tmp{\theenumi,}%
 \stepcounter{enumi} 
 \edef\tmpt{\theenumi,} 
 \stepcounter{enumi} 
 \edef\tmp{\endgroup\noexpand\item[\tmp\tmpt\labelenumi]}%
 \tmp}   
 
\makeatletter
\@namedef{subjclassname@2020}{%
  \textup{2020} Mathematics Subject Classification}
\makeatother

\hypersetup{colorlinks=true,
  linkcolor = black,
  citecolor = black, 
  urlcolor =  black, 
}

\begin{document}

\author{Kay Schwieger}
\address{iteratec GmbH, Stuttgart}
\email{kay.schwieger@gmail.com}

\author{Stefan Wagner}
\address{Blekinge Tekniska H\"ogskola}
\email{stefan.wagner@bth.se}

\subjclass[2020]{37A55, 46L55}

\keywords{Free action, coaction, fixed point algebra}

\begin{abstract}
Let $G$ be a compact group,  let $\aB$ be a unital C\Star-algebra, and let $(\aA,G,\alpha)$ be a free C\Star-dynamical system, in the sense of Ellwood, with fixed point algebra $\aB$.
We prove that $(\aA,G,\alpha)$ can be realized as the invariants of an equivariant coaction of $G$ on a corner of $\aB \tensor \Com(\hH)$ for a certain Hilbert space $\hH$ that arises from the freeness of the action.
This extends a result by Wassermann~\cite{Wass88a} for free C\Star-dynamical systems with trivial fixed point algebras.
As an application, we show that any faithful \Star-representation of $\aB$ on a Hilbert space $\hH_\aB$ gives rise to a faithful covariant representation of $(\aA,G,\alpha)$ on some truncation of $\hH_\aB \otimes \hH$.
\end{abstract}

\maketitle

\section{Introduction}

Let $\aA$ be a unital C\Star-algebra and let $G$ be a compact group that acts strongly continuously on $\aA$ by \Star-automorphisms $\alpha_g:\aA \to \aA$, $g \in G$.
In this article, we call this data a \emph{C\Star-dynamical system}, denote it briefly by $(\aA,G,\alpha)$, and customarily write $\aB$ for its fixed point algebra. 
Research into C\Star-dynamical systems is inherently interesting and has always been an area of active research both in operator algebras and noncommutative geometry.
It is desirable to identify properties of C\Star-dynamical systems that are both commonly occurring and significant enough to obtain interesting results.
To expedite matters, let us revisit the notion of \emph{freeness}, which exemplifies one such property:
A C\Star-dynamical system $(\aA,G,\alpha)$ is called \emph{free} if the so-called \emph{Ellwood map} 
\begin{align*}
	\Phi: \aA \tensor_{\text{alg}} \aA \rightarrow C(G,\aA),
	&&
	\Phi(x\tensor y)(g):=x \alpha_g(y)
\end{align*}
has dense range with respect to the canonical C\Star-norm on $\Cont(G,\aA)$.
This condition, first introduced by Ellwood~\cite{Ell00} for actions of quantum groups on C\Star-algebras, is known to be equivalent to Rieffel's saturatedness~\cite{Rieffel91} and the Peter-Weyl-Galois condition~\cite{BaCoHa15}.
By \cite[Prop.~7.1.12 and Thm.~7.2.6]{Phi87},  a continuous action $r:P\times G\rightarrow P$ of a compact group $G$ on a compact space $P$ is free in the classical sense, \ie, all stabilizer groups are trivial, if and only if the induced C\Star-dynamical system $(C(P),G,\alpha)$,  where $\alpha_g(f)(p) := f(r(p,g))$ for all $g\in G$, $f\in C(P)$,  and $p\in P$, is free in the sense of Ellwood. 
Free C\Star-dynamical systems thus provide a natural framework for noncommutative principal bundles.
Because of this and their wide range of applications, these objects have garnered widespread interest and have been extensively studied by many researchers in recent years
(see, \eg, \cite{AnGuIsRu22,BaCoHa15,CacMes19,DaSiZu14,FaLa24, GoLaPe94,Phi09,SchWa17,SchWa20} and ref.~therein).

Free and ergodic C\Star-dynamical systems, also known as full multiplicity ergodic actions,  have been the focal point of~\cite{Wass88a}.
In particular, Wassermann proved the interesting result that each free and ergodic C\Star-dynamical system $(\aA,G,\alpha)$ can be realized as the invariants of an equivariant coaction of $G$ on $\Com((L^2(G))$. 
Noteworthily, this constitutes an important step in the classification of such C\Star-dynamical systems by means of a generalized cocycle theory.

Now,  let us consider a free C\Star-dynamical  system $(\aA,G,\alpha)$ with a general fixed point algebra~$\aB$.
The overall purpose of this paper is to extend Wassermann's result by showing that $(\aA,G,\alpha)$ can be realized as the invariants of an equivariant coaction of $G$ on a corner of $\aB \tensor \Com(\hH)$ for a certain Hilbert space $\hH$ that arises from the freeness property (Theorem~\ref{thm:pi_S}).
From this we derive the following representation theoretic result: 
Any faithful \Star-representation of $\aB$ on a Hilbert space $\hH_\aB$ gives rise to a faithful covariant representation of $(\aA,G,\alpha)$ on some truncation of $\hH_\aB \otimes \hH$ (Corollary~\ref{cor:pi_S}).
It is our hope that Theorem~\ref{thm:pi_S} can be further utilized to present conditions under which certain properties of the fixed point algebra $\aB$ pass over to $(\aA,G,\alpha)$. 
Note that we do not address the problem of classifying (general) free C\Star-dynamical  systems in this paper.
In fact,  this problem has been thoroughly treated in~\cite{SchWa17} using different methods.

Following this introduction, the fundamental definitions and notations are presented in Section~\ref{sec:pre+not}. 
The proofs are provided in Section~\ref{sec:free_co}, which is in essence divided into three parts: first, constructing an equivariant coaction from a given free C\Star-dynamical system; second, identifying the corresponding invariants as a free C\Star-dynamical system; and third, proving that the original and the derived free C\Star-dynamical systems are isomorphic.

\section{Preliminaries and notations}\label{sec:pre+not}


\subsection{About tensor products}

Throughout this article tensor products of C\Star-algebras are taken with respect to the minimal tensor product, which is simply denoted by $\tensor$. 
Let $\aA$, $\aB$, and $\alg C$ be unital C\Star-algebras. 
If there is no ambiguity, then we consider each one of them as a C\Star-subalgebra of $\aA \tensor \aB \tensor \alg C$ and extend maps on $\aA$, $\aB$, or $\alg C$ canonically by tensoring with the respective identity map. 
For the sake of clarity, we also make use of the leg numbering notation, for instance, given $x \in \aA \tensor \alg C$, we write $x_{13}$ to denote the corresponding element in~$\aA \tensor \aB \tensor \alg C$.

\subsection{About multiplier algebras}\label{sec:multipliers}

Let $\aA$ be a C\Star-algebra. 
A linear operator $m: \aA \to \aA$ is said to be a multiplier of $\aA$ if for each $a \in \aA$ there exists $c \in \aA$ such that $a^* m(b) = c^* b$ for all $b \in \aA$. 
The set of all multipliers of $\aA$ is a unital C\Star-algebra which is denoted by $\Mul(\aA)$ and called the multiplier algebra of $\aA$.
For a faithful and nondegenerate \Star-representation $\pi: \aA \to \End(\hH)$ it may be identified with the following operator algebra:
\begin{equation*}
	\Mul(\aA) = \{ m \in \End(\hH) : (\forall a \in \aA) \, ma, am \in \aA\}.
\end{equation*}
For a thorough treatment of multipliers we refer to~\cite[Sec. 3.12]{Ped18}.


\begin{lemma}\label{lem:multipliers}
Let $\pi_\aA : \aA \to \End(\hH_\aA)$ and $\pi_\aB : \aB \to \End(\hH_\aB)$ be faithful and nondegenerated \Star-homomorphisms of C\Star-algebras $\aA$ and $\aB$ respectively. 
Then
\begin{equation*}
	\Mul(\aA) \tensor \aB = (\End(\hH_\aA) \tensor \aB) \cap \Mul(\aA \tensor \aB).
\end{equation*}
\end{lemma}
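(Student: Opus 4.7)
My strategy is to treat the two inclusions separately. The inclusion $\supseteq$ is the routine direction: by the universal property of multiplier algebras, the faithful nondegenerate \Star-representation $\pi_\aA$ extends strictly continuously to an injective \Star-homomorphism $\Mul(\aA) \hookrightarrow \End(\hH_\aA)$, so that $\Mul(\aA) \tensor \aB$ embeds canonically into $\End(\hH_\aA) \tensor \aB$. A direct computation on elementary tensors, $(m \tensor b)(a \tensor c) = ma \tensor bc \in \aA \tensor \aB$ and symmetrically on the other side, combined with linearity and norm-continuity, then shows that $\Mul(\aA) \tensor \aB \subseteq \Mul(\aA \tensor \aB)$ as well.

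For the non-trivial inclusion $\subseteq$, fix $x \in (\End(\hH_\aA) \tensor \aB) \cap \Mul(\aA \tensor \aB)$. The idea is to extract the ``first leg'' of $x$ by slice maps and verify that each such leg is in fact a multiplier of~$\aA$. For every $\omega \in \aB^*$, consider the bounded slice map $L_\omega := \id_{\End(\hH_\aA)} \tensor\, \omega : \End(\hH_\aA) \tensor \aB \to \End(\hH_\aA)$. For any $a \in \aA$ and $b \in \aB$, the multiplier property of $x$ gives $(a \tensor b) x \in \aA \tensor \aB$, and since slicing an element of $\aA \tensor \aB$ lands in $\aA$, the module property of slice maps yields $a \cdot L_{\omega(b\,\cdot\,)}(x) \in \aA$. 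A Cohen--Hewitt factorization of the functionals on $\aB$ then shows that every $\omega' \in \aB^*$ has the form $\omega(b\,\cdot\,)$, so $a \cdot L_\omega(x) \in \aA$ for every $a \in \aA$ and $\omega \in \aB^*$. The symmetric computation starting from $x(a \tensor b) \in \aA \tensor \aB$ gives $L_\omega(x) \cdot a \in \aA$, and together these inclusions force $L_\omega(x) \in \Mul(\aA)$ for every $\omega \in \aB^*$.

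The final step is to deduce that $x$ itself lies in $\Mul(\aA) \tensor \aB$ from the information that all its left slices lie in $\Mul(\aA)$. Via the Hahn--Banach theorem this reduces to showing that every continuous linear functional on $\End(\hH_\aA) \tensor \aB$ which annihilates $\Mul(\aA) \tensor \aB$ also annihilates $x$; for decomposable functionals of the form $\rho \tensor \omega$ with $\rho|_{\Mul(\aA)} = 0$ this is immediate from $(\rho \tensor \omega)(x) = \rho(L_\omega(x)) = 0$. I expect this last Fubini/slice-map step to be the main obstacle, since one has to justify the passage from decomposable to arbitrary annihilators; in the present situation it is available because $\End(\hH_\aA)$ is an injective von Neumann algebra, so that the Fubini product of $\Mul(\aA)$ and $\aB$ inside $\End(\hH_\aA) \tensor \aB$ coincides with the minimal tensor product $\Mul(\aA) \tensor \aB$.
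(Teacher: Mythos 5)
The first half of your argument is sound and is essentially the paper's computation in disguise: showing that every left slice $L_\omega(x)$, $\omega \in \aB^*$, lies in $\Mul(\aA)$ is exactly the paper's step $x_i a = (\id \tensor \omega_i)\bigl(x(a \tensor \one_\aB)\bigr) \in \aA$, and your use of Cohen factorization of functionals on $\aB$ is a clean substitute for the paper's approximate-unit remark in the non-unital case. The genuine gap is in your final step. From ``$x \in \End(\hH_\aA) \tensor \aB$ and $L_\omega(x) \in \Mul(\aA)$ for all $\omega \in \aB^*$'' you want to conclude $x \in \Mul(\aA) \tensor \aB$. That is precisely the slice map property for the triple $\bigl(\End(\hH_\aA), \Mul(\aA), \aB\bigr)$, i.e.\ the assertion that the C\Star-Fubini product of $\Mul(\aA)$ and $\aB$ inside $\End(\hH_\aA) \tensor \aB$ equals $\Mul(\aA) \tensor \aB$ — and this is \emph{not} automatic. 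The slice map conjecture for C\Star-subalgebras $D \subseteq \End(\hH)$ fails in general (Wassermann's counterexamples); the known sufficient conditions are approximation properties of $\aB$ (e.g.\ the operator approximation property, by Kraus's characterization) or special features of the inclusion, not injectivity of the ambient algebra. Injectivity of $\End(\hH_\aA)$ controls the \emph{normal} (von Neumann algebraic) Fubini product, which is a different object from the one you need here. Since the lemma is asserted for arbitrary $\aB$, your Hahn--Banach reduction breaks exactly where you anticipated: at the passage from decomposable annihilators $\rho \tensor \omega$ to arbitrary ones.

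The paper avoids this issue by never slicing a general element: it first treats $x = \sum_{i=1}^n x_i \tensor b_i$ a \emph{finite} sum of elementary tensors with $b_1,\dots,b_n$ linearly independent, produces functionals $\omega_i$ dual to the $b_i$ (a state plus Gram--Schmidt), and recovers the legs $x_i = (\id \tensor \omega_i)(x)$ exactly; for such $x$ the implication ``all legs in $\Mul(\aA)$ $\Rightarrow$ $x \in \Mul(\aA) \odot \aB$'' is trivial linear algebra, and the general case is then obtained by a limiting argument. To repair your proof you should either restrict the slice computation to finite sums as the paper does and then pass to limits, or add a hypothesis (e.g.\ an approximation property for $\aB$) that actually guarantees the slice map property you are invoking.
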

\begin{proof}
Clearly, $\Mul(\aA) \tensor \aB \subseteq  (\End(\hH_\aA) \tensor \aB) \cap \Mul(\aA \tensor \aB)$. 
Hence it suffices to establish the opposite inclusion.
For this purpose, we first assume that $\aB$ is unital.
Let $x := \sum_{i=1}^n x_i \tensor b_i \in \Mul(\aA \tensor \aB)$ for operators
$x_1,\ldots,x_n \in \End(\hH_\aA)$ and linear independent elements $b_1,\ldots b_n \in \aB$.
Moreover, let $\psi$ be a state on $\aB$ such that $\psi(b_i^* b_i) > 0$ for all $1 \le i \le n$. 
Applying Gram-Schmidt, we may \wilog assume that $\psi(b_i^* b_j) = \delta_{i,j}$ for all $1 \le i,j \le n$, \ie, $b_1, \dots, b_n$ is an orthonormal system.
We write $\omega_i(b) := \psi(b_i^* b)$, $b \in \aB$, $1 \le i \le n$, for the corresponding dual system of linear functionals on $\aB$. 
Since $x \in \Mul(\aA \tensor \aB)$, for each $a \in \aA$ we have $x(a \tensor \one_\aB) \in \aA \tensor \aB$. 
Consequently,
\begin{equation*}
	x_i a 
	= (\id \tensor \omega_i(x)) a 
	= \id \tensor \omega_i (x (a \tensor \one_\aB)) \in \aA
\end{equation*}
for all $1 \le i \le n$ and $a \in \aA$.
Likewise, we find $ax_i \in \aA$. 
It follows that $x_i \in \Mul(\aA)$ for all $1 \le i \le n$, and hence $x \in \Mul(\aA) \tensor \aB$. 
Taking limits, we thus get $\bigl( \End(\hH_A) \tensor \aB \bigr) \cap \Mul(\aA \tensor \aB) \subseteq \Mul(\aA) \tensor \aB$. 
For non-unital $\aB$ we may replace $\one_\aB$ by an approximate unit of $\aB$ and use similar arguments, the detailed verification being left to the reader. 
\end{proof}


\subsection{About coactions of compact groups}
\label{sec:coaction}

Let $G$ be a compact group. 
Here and subsequently, we denote by $\lambda : G \to \alg U (L^2(G))$ the left regular representation given by $(\lambda_gf)(h):=f(g^{-1}h)$ and by $r : G \to \alg U (L^2(G))$ the right regular representation given by $(r_gf)(h):=f(hg)$.
For $f \in C(G)$ we write $r(f)$ for the integrated form with respect to the right regular representation and consider $C^*_r(G)$ as the norm closure of the \Star-algebra $r(C(G)) \subseteq \End(L^2(G))$ or, equivalently, as the fixed point algebra of $\Com(L^2(G))$ under the adjoint action $\Ad[\lambda_g]$, $g\in G$. 

Usually, we consider $C^*_r(G)$ as a quantum group with respect to the faithful and nondegenerate \Star-homomorphism $\delta_G: C^*_r(G) \to \Mul(C^*_r(G) \tensor C^*_r(G))$
defined by the integrated form of the diagonal representation $G \ni g \mapsto r_g \tensor r_g \in \alg U(L^2(G) \tensor L^2(G))$. 
Due to~\cite[pp. 255]{Lan79}, the unitary $W_G$ on $L^2(G \times G) \cong L^2(G) \tensor L^2(G)$ defined by $(W_G f)(g,h) := f(g,hg)$
implements $\delta_G$ in the sense that $\delta_G(x) = W_G^* \bigl( x \tensor \one_G \bigr) W_G$ for all $x \in C^*_r(G)$. 
We also utilize the fact that $W_G \in \Mul \bigl( C(G) \tensor C^*_r(G) \bigr)$ as well as the identities
\begin{align}
	(W_G)_{23}(W_G)_{12}(W_G)_{13} &= (W_G)_{12}(W_G)_{23},
	\label{eq:WGcocycel}
	\\
	(r_g \tensor \one_G) W_G &= W_G (r_g \tensor r_g)
	&&
	\forall g \in G,
	\label{eq:WGright}
	\\
	(\one_G \tensor \lambda_g) W_G &= W_G (\one_G \tensor \lambda_g) 
	&&
	\forall g \in G,
	\label{eq:WGequivariance}
	\\
	(\lambda_g \tensor r_g) W_G &= W_G (\lambda_g \tensor \one_G)
	&&
	\forall g \in G.
	\label{eq:WGcommuting}
\end{align}

More generally, we are concerned with coactions of compact groups on C\Star-algebras and refer to~\cite[App.~A]{EKQR06} for a detailed discussion on the subject. 
However, for expediency we now repeat the basic definition.

\begin{defn}
A \emph{coaction} of a compact group $G$ on a C\Star-algebra $\aA$ is a faithful and nondegenerate \Star-homomorphism $\delta: \aA \to \Mul(\aA \tensor C^*_r(G))$ satisfying the \emph{coaction identity} 
\begin{equation}
	(\delta \tensor \id) \circ \delta = (\id \tensor \delta_G) \circ \delta.\label{eq:coactcon}
\end{equation}
As coactions take values in multiplier algebras, the coaction identity involves the extensions of the maps $\delta \tensor \id$ and $\id \tensor \delta_G$ to the multiplier algebras $\Mul(\aA \tensor C^*_r(G))$ and $\Mul(\aA \tensor C^*_r(G) \tensor C^*_r(G))$ in domain and codomain, respectively. 
\end{defn}

\subsection{About free C$^*$-dynamical systems}\label{sec:C*}

One of the key tools used in this article is a characterization of freeness that we provided in \cite[Lem.~3.2]{SchWa17}, namely that a C\Star-dynamical system $(\aA,G,\alpha)$ is free if and only if for each irreducible representation $(\sigma,V_\sigma)$ of $G$ there is a finite-dimensional Hilbert space~$\hH_\sigma$ and an isometry $s(\sigma) \in \aA \tensor \End(V_\sigma,\hH_\sigma)$ satisfying $\alpha_g(s(\sigma))=s(\sigma) (\one_\aA \tensor \sigma_g)$ for all $g \in G$. 
However, to simplify notation we patch this family of isometries together and use the following characterization instead. 

\begin{lemma}[\cf~{\cite[Lem. ~3.1]{SchWa20}}]\label{lem:coisometry}
	For a C\Star-dynamical system $(\aA, G, \alpha)$ the following statements are equivalent:
	\begin{equivalence}
	\item
		$(\aA, G, \alpha)$ is free.
	\item
		There is a unitary representation $\mu: G \to \mathcal U(\hH)$ with finite-dimensional multiplicity spaces and, given any faithful covariant representation $(\pi,u)$ of $(\aA,G,\alpha)$ on some Hilbert space~$\hH_\aA$, an isometry $S \in \End(\hH_\aA \tensor L^2(G), \hH_\aA \tensor \hH)$ satisfying
		\begin{align}
			S \aA \tensor \Com(L^2(G)) 
			&\subseteq 
			\aA \tensor \Com(L^2(G),\hH),\label{eq:SOPleft}
			\\
			(u_g \tensor \one_\hH) S 
			&= 
			S (u_g  \tensor r_g) 
			&&
			\forall g \in G,
			\label{eq:SOPequivariance}
			\\
			(\one_\aA \tensor \mu_g) S 
			&= 
			S (\one_\aA \tensor \lambda_g) 
			&&
			\forall g \in G.\label{eq:SOPcommuting}
		\end{align}
Here, we do not distinguish between $\aA$ and $\pi(\aA) \subseteq \End(\hH_\aA)$ for sake of brevity.
Furthermore, the tensor product $\aA \tensor \Com(L^2(G),\hH)$ is closed with respect to the operator norm, where $\Com(L^2(G),\hH)$ is regarded as the respective corner of $\Com(L^2(G) \oplus \hH)$.
	\end{equivalence}
\end{lemma}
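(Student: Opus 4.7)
The plan is to reduce the equivalence to the isotypical characterization of freeness recorded just above the lemma: $(\aA,G,\alpha)$ is free if and only if, for every irreducible unitary representation $(\sigma,V_\sigma)$ of $G$, there exist a finite-dimensional Hilbert space $\hH_\sigma$ and an isometry $s(\sigma) \in \aA \tensor \End(V_\sigma,\hH_\sigma)$ with $\alpha_g(s(\sigma)) = s(\sigma)(\one_\aA \tensor \sigma_g)$, see \cite[Lem.~3.2]{SchWa17}. The equivalence then becomes a matter of packaging the family $\{s(\sigma)\}_\sigma$ into a single operator $S$ via the Peter-Weyl decomposition of $L^2(G)$, and, conversely, recovering each $s(\sigma)$ from $S$ via Schur's lemma.

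For (a)$\Rightarrow$(b), I would set $\hH := \bigoplus_{\sigma \in \Irrep(G)} \hH_\sigma \tensor V_\sigma$ with $\mu_g := \bigoplus_\sigma \one_{\hH_\sigma} \tensor \sigma_g$, so the multiplicity space of $\sigma$ under $\mu$ is the prescribed finite-dimensional $\hH_\sigma$. Peter-Weyl provides a decomposition $L^2(G) \cong \bigoplus_\sigma M_\sigma \tensor V_\sigma$ on which $r_g$ acts as $\one \tensor \sigma_g$ and $\lambda$ acts only on the multiplicity spaces $M_\sigma$. After passing each $s(\sigma)$ through $(\pi,u)$ into $\End(\hH_\aA) \tensor \End(V_\sigma,\hH_\sigma)$, I would define $S$ blockwise by letting $s(\sigma)$ act on the $\hH_\aA$- and $V_\sigma$-factors and leaving $M_\sigma$ untouched, then assembling orthogonally over $\sigma$. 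The isometry property of $S$ reduces to that of each $s(\sigma)$; \eqref{eq:SOPequivariance} follows from $\alpha_g(s(\sigma)) = s(\sigma)(\one \tensor \sigma_g)$ together with the right regular action on $V_\sigma$; \eqref{eq:SOPcommuting} holds because $\mu_g$ and $\lambda_g$ act through the identity on the factors preserved by $S$ in a matching way; and \eqref{eq:SOPleft} follows because on each block the image of $S\bigl(\aA \tensor \Com(L^2(G))\bigr)$ lies in $\aA \tensor \End(V_\sigma,\hH_\sigma) \tensor \Com(M_\sigma)$, hence inside $\aA \tensor \Com(L^2(G),\hH)$.

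For (b)$\Rightarrow$(a), condition \eqref{eq:SOPcommuting} together with Schur's lemma force $S$, after identifying multiplicity spaces appropriately, to split into block operators $s(\sigma) \tensor \one$ for isometric $s(\sigma) \in \End(\hH_\aA) \tensor \End(V_\sigma,\hH_\sigma)$, one for each $\sigma \in \Irrep(G)$. Condition \eqref{eq:SOPequivariance} restricted to the $\sigma$-block becomes precisely the covariance $\Ad[u_g] \circ s(\sigma) = s(\sigma) \circ (\one \tensor \sigma_g)$. The main obstacle, I expect, is then upgrading the coefficients of $s(\sigma)$ from $\End(\hH_\aA)$ to $\aA$: restricting \eqref{eq:SOPleft} to the $\sigma$-block yields $s(\sigma)(\aA \tensor \one) \subseteq \aA \tensor \End(V_\sigma,\hH_\sigma)$, and Lemma~\ref{lem:multipliers}, combined with the finite-dimensionality of $V_\sigma$ and $\hH_\sigma$ and the unitality of $\aA$, places $s(\sigma)$ in $\Mul(\aA) \tensor \End(V_\sigma,\hH_\sigma) = \aA \tensor \End(V_\sigma,\hH_\sigma)$. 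An application of \cite[Lem.~3.2]{SchWa17} then delivers freeness.
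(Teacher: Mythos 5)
The paper does not actually prove this lemma: it imports it from \cite[Lem.~3.1]{SchWa20}, and in the paragraph preceding the statement it describes it precisely as the result of ``patching together'' the family of isometries $s(\sigma)$ furnished by the isotypical characterization of freeness in \cite[Lem.~3.2]{SchWa17}. That is exactly your strategy, and both directions of your sketch are sound in outline: the blockwise assembly over the Peter--Weyl decomposition for (a)$\Rightarrow$(b), and Schur's lemma plus the upgrade of the coefficients of $s(\sigma)$ from $\End(\hH_\aA)$ to $\aA$ for (b)$\Rightarrow$(a). For the latter upgrade, note that since $\aA$ is unital you do not need Lemma~\ref{lem:multipliers} at all: applying \eqref{eq:SOPleft} to $\one_\aA \tensor p_\sigma$, where $p_\sigma$ is the finite-rank projection onto the $\sigma$-isotypic block of $L^2(G)$, already places $s(\sigma)$ in $\aA \tensor \End(V_\sigma,\hH_\sigma)$.

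The one detail you must repair is the definition of $\mu$. Condition \eqref{eq:SOPcommuting} makes $S$ an intertwiner from $\one_\aA \tensor \lambda$ to $\one_\aA \tensor \mu$, and on the $\sigma$-block $M_\sigma \tensor V_\sigma$ of $L^2(G)$ the left regular representation acts on $M_\sigma \cong \bar V_\sigma$ by the \emph{conjugate} representation $\bar\sigma$. Your operator $S$ leaves $M_\sigma$ untouched and replaces $V_\sigma$ by $\hH_\sigma$, so it takes values in $\hH_\aA \tensor \bigoplus_\sigma M_\sigma \tensor \hH_\sigma$, on which the representation induced through $S$ is $\bigoplus_\sigma \bar\sigma_g \tensor \one_{\hH_\sigma}$. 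With your choice $\hH = \bigoplus_\sigma \hH_\sigma \tensor V_\sigma$ and $\mu_g = \bigoplus_\sigma \one_{\hH_\sigma} \tensor \sigma_g$, the two sides of \eqref{eq:SOPcommuting} restrict on the $\sigma$-block to inequivalent representations whenever $\sigma \not\cong \bar\sigma$, so the identity fails. Setting $\mu_g := \bigoplus_\sigma \bar\sigma_g \tensor \one_{\hH_\sigma}$ on $\hH := \bigoplus_\sigma \bar V_\sigma \tensor \hH_\sigma$ (the multiplicity spaces are still the finite-dimensional $\hH_\sigma$) repairs this, and the rest of your argument, including the mirrored bookkeeping in (b)$\Rightarrow$(a), goes through.
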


\begin{remark}\label{rem:SOPright}
	The adjoint $S^* \in \End(\hH_\aA \tensor \hH, \hH_\aA \tensor L^2(G))$ of $S$ satisfies $S^* \aA \tensor \Com(\hH) \subseteq \aA \tensor \Com(\hH,L^2(G))$ or, equivalently, 
	\begin{equation}
		\aA \tensor \Com(\hH) S \subseteq \aA \tensor \Com(L^2(G),\hH).\label{eq:SOPright}
	\end{equation}
	For this reason, we can assert that $S$ is, in fact, a multiplier for $\aA \tensor \Com(L^2(G) \oplus \hH)$, \ie, $S \in \Mul(\aA \tensor \Com( L^2(G) \oplus \hH))$, with $(1_\aA \tensor p_{L^2(G)}) S = 0 = S (1_\aA \tensor p_\hH)$, where $p_\hH$ and $p_{L^2(G)}$ denote the canonical projections onto $\hH$ and $L^2(G)$ respectively.
\end{remark}

\begin{remark}\label{rem:cleft}
	A particular simple class of free actions is given by so-called cleft actions (see~\cite{SchWa16}). 
Regarded as noncommutative principal bundles, these actions are essentially characterized by the fact that all associated noncommutative vector bundles are trivial. 
For convenience of the reader we now recall the definition. 
Indeed, we call a C\Star-dynamical system $(\aA,G,\alpha)$ \emph{cleft} if there is a unitary $U \in \Mul(\aA \tensor C^*_r(G))$ such that $\bar{\alpha}_g(U) = U (1_\aA \tensor r_g)$ for all $g \in G$.
Here, $\bar{\alpha}_g$, $g\in G$, denotes the strictly continuous extension of $\alpha_g \tensor \one_G$, $g\in G$, to $\Mul(\aA \tensor C^*_r(G))$ (see \eg~\cite[Prop. 3.12.10]{Ped18}), which is continuous for the strict topology.  
It is clear that each cleft C\Star-dynamical system is free with a possible choice for $\mu$ and $\hH$ given by $\lambda$ and $L^2(G)$ respectively.
\end{remark}

\section{The realization}\label{sec:free_co}

For a start, we fix a free C\Star-dynamical system $(\aA,G,\alpha)$ with fixed point algebra~$\aB$ together with a faithful covariant representation $(\pi,u)$ thereof on some Hilbert space $\hH_\aA$.
By Lemma~\ref{lem:coisometry}, there is a unitary representation $\mu: G \to \mathcal U(\hH)$ with finite-dimensional multiplicity spaces and an isometry $S \in \End(\hH_\aA \tensor L^2(G), \hH_\aA \tensor \hH)$ satisfying~\eqref{eq:SOPleft}, \eqref{eq:SOPequivariance}, \eqref{eq:SOPcommuting}, and~\eqref{eq:SOPright}. 

It is immediate that $P := SS^* \in \End(\hH_\aA \tensor \hH)$ is a projection satisfying $(u_g \tensor \mu_h) P = P (u_g \tensor \mu_h)$ for all $g,h \in G$. 
Moreover, we have $P \in \Mul(\aB \tensor \Com(\hH))$, which is easily checked.
In what follows, the central object of interest is the corresponding corner
\begin{equation}
		\aD := P ( \aB \tensor \Com(\hH) ) P \subseteq \aB \tensor \Com(\hH) \cap P\End(\hH_\aA \tensor \hH)P.\label{eq:D}
\end{equation}
We see at once that $\aD$ is pointwise fixed by $\Ad[u_g \tensor \one_\hH]$, $g\in G$.
Furthermore,  $\Ad[\one_\aA \tensor \mu_g]$, $g\in G$, acts strongly continuous on $\aD$, \ie, $(\aD,G,\Ad[\one_\aA \tensor \mu])$ is a C\Star-dynamical system. 
Our first task is to construct a coaction of $G$ on $\aD$ in terms of $S$ and the unitary $W_G$ (\cf Section~\ref{sec:coaction}). 
We begin with a series of lemmas.

\begin{lemma}\label{lem:WS_elementary}
	For the element $W_S := S_{12} (W_G)_{23} S_{12}^*$ in $\End(\hH_\aA \tensor \hH \tensor L^2(G))$ the following assertions hold:
	\begin{enumerate}[label={\arabic*}.,ref=\ref{lem:WS_elementary}.{\textit{\arabic*}}.]
	\item
		$W_S$ is a partial isometry with initial and final projection $P \tensor \one_G$.\label{lem:WSpariso}
	\item
		$(u_g \tensor \mu_h \tensor r_h) W_S = W_S (u_g \tensor \mu_h \tensor r_g)$ for all $g,h \in G$.\label{lem:WSequiv}
	\item
		$W_S \in \Mul(\aA \tensor \Com(\hH) \tensor C^*_r(G))$.\footnote{Note that $\aA \tensor \Com(\hH) \tensor C^*_r(G) \subseteq \End(\hH_\aA \tensor \hH \tensor L^2(G))$ is nondegenerate.}\label{lem:WSmul}
	\end{enumerate}
\end{lemma}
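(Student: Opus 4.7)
The three parts rely on largely independent arguments; I would handle them in order, with part~3 being the main hurdle. For part~1, the plan is a direct calculation: since $S^* S = \one$ one has $S_{12}^* S_{12} = \one$, and since $W_G$ is unitary both $W_S W_S^*$ and $W_S^* W_S$ collapse telescopically to $S_{12} S_{12}^* = (SS^*) \tensor \one_G = P \tensor \one_G$, so $W_S$ is a partial isometry with initial and final projection $P \tensor \one_G$.

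For part~2, I would commute $u_g \tensor \mu_h \tensor r_h$ through $W_S = S_{12} (W_G)_{23} S_{12}^*$ in three stages. Combining \eqref{eq:SOPequivariance} and \eqref{eq:SOPcommuting} yields $(u_g \tensor \mu_h) S = S (u_g \tensor r_g \lambda_h)$, whence $(u_g \tensor \mu_h \tensor r_h) S_{12} = S_{12} (u_g \tensor r_g \lambda_h \tensor r_h)$, the $r_h$ on the third leg commuting with $S_{12}$. Next, decomposing $r_g \lambda_h \tensor r_h = (\lambda_h \tensor r_h)(r_g \tensor \one_G)$ and applying \eqref{eq:WGcommuting} followed by \eqref{eq:WGright} gives $(r_g \lambda_h \tensor r_h) W_G = W_G (r_g \lambda_h \tensor r_g)$. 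Finally, taking adjoints of \eqref{eq:SOPequivariance} and \eqref{eq:SOPcommuting} (and substituting $g^{-1}, h^{-1}$ for $g, h$) produces the dual intertwiners $(u_g \tensor r_g) S^* = S^* (u_g \tensor \one)$ and $(\one \tensor \lambda_h) S^* = S^* (\one \tensor \mu_h)$, which combine to $(u_g \tensor r_g \lambda_h \tensor r_g) S_{12}^* = S_{12}^* (u_g \tensor \mu_h \tensor r_g)$. Chaining these three identities delivers the asserted equivariance.

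Part~3 is the main obstacle, since $W_G$ naturally inhabits $\Mul(C(G) \tensor C^*_r(G))$, an algebra not tied to $\Com(\hH)$. The plan is to factor left multiplication by $W_S$ through three inclusions: (i) $S_{12}^*$ sends $\aA \tensor \Com(\hH) \tensor C^*_r(G)$ into $\aA \tensor \Com(\hH, L^2(G)) \tensor C^*_r(G)$ by Remark~\ref{rem:SOPright}; (ii) $(W_G)_{23}$ preserves the latter space; (iii) $S_{12}$ maps it back into $\aA \tensor \Com(\hH) \tensor C^*_r(G)$, using that $S \in \Mul(\aA \tensor \Com(L^2(G) \oplus \hH))$ and that $S(a \tensor T)$ falls inside the $\Com(\hH)$-corner of $\Com(L^2(G) \oplus \hH)$ for $T \in \Com(\hH, L^2(G))$. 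The heart of the argument is (ii): for $T \in \Com(\hH, L^2(G))$ and $c \in C^*_r(G)$ I factor $T \tensor c = (\one \tensor c)(T \tensor \one)$, so that $W_G (T \tensor c) = [W_G(\one \tensor c)](T \tensor \one)$. The first bracket lies in $C(G) \tensor C^*_r(G)$ by the multiplier property of $W_G$, and right multiplication by $T \tensor \one$ is bounded, sending each $f \tensor c'$ to $fT \tensor c' \in \Com(\hH, L^2(G)) \tensor C^*_r(G)$ (since $fT$ is compact, being bounded times compact); density then handles the general case. Right multiplication by $W_S$ is treated analogously by applying the same argument to $W_S^* = S_{12} (W_G^*)_{23} S_{12}^*$ and taking adjoints.
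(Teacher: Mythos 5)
Your proofs of parts~1 and~2 are correct and essentially identical to the paper's: the same telescoping computation gives the partial isometry, and the same three-stage intertwining computation gives the equivariance (your splitting of $r_g\lambda_h \tensor r_h$ before applying \eqref{eq:WGcommuting} and \eqref{eq:WGright} is only a cosmetic variant). For part~3 you take a genuinely different, but equally valid, route at the central step. The paper does not invoke $W_G \in \Mul(C(G) \tensor C^*_r(G))$ there: it first uses only that $W_G$ is a bounded operator on $L^2(G)\tensor L^2(G)$, together with \eqref{eq:SOPleft} and \eqref{eq:SOPright}, to place $W_S$ in $\Mul(\aA \tensor \Com(\hH) \tensor \Com(L^2(G)))$, and then cuts the third leg down to $C^*_r(G)$ by combining \eqref{eq:WGequivariance} with the description of $C^*_r(G)$ as the fixed point algebra of $\Com(L^2(G))$ under $\Ad[\lambda_g]$. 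You instead keep $C^*_r(G)$ in the third leg throughout and control the middle factor $(W_G)_{23}$ via the factorization $T \tensor c = (\one \tensor c)(T \tensor \one)$, the multiplier property $W_G(\one\tensor c) \in C(G)\tensor C^*_r(G)$, and the absorption $C(G)\cdot\Com(\hH,L^2(G)) \subseteq \Com(\hH,L^2(G))$. Your version is more self-contained, avoiding the detour through the larger algebra and the fixed-point characterization of $C^*_r(G)$; the paper's version needs less structural information about $W_G$ (boundedness plus \eqref{eq:WGequivariance} rather than the multiplier property over $C(G)\tensor C^*_r(G)$) and recycles a characterization of $C^*_r(G)$ already set up in Section~\ref{sec:coaction}. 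The outer inclusions (i) and (iii) of your argument coincide with the paper's, and the adjoint trick for right multiplication is fine.
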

\begin{proof}
\begin{enumerate}[label={\arabic*}.]
\item
	Since $S$ is an isometry, we see at once that the initial and final projections are given by $W_{S}^* W_S = P \tensor \one_G$ and $W_S W_S^* = P \tensor \one_G$ respectively.
\item
	Let $g,h \in G$. 
	Then an easy verification yields
	\begin{align*}
		(u_g \tensor \mu_h \tensor r_h) W_S 
		&=
		(u_g \tensor \mu_h \tensor r_h) S_{12} (W_G)_{23} S_{12}^*
		\\
		\overset{\eqref{eq:SOPequivariance},\eqref{eq:SOPcommuting}}&{=} 
		S_{12} (u_g \tensor r_g \lambda_h \tensor r_h) (W_G)_{23} S_{12}^*
		\\
		\overset{\eqref{eq:WGright},\eqref{eq:WGcommuting}}&{=} 
		S_{12} (W_G)_{23} (u_g \tensor r_g \lambda_h \tensor r_g) S_{12}^*
		\\
		\overset{\eqref{eq:SOPequivariance},\eqref{eq:SOPcommuting}}&{=}  
		S_{12} (W_G)_{23} S_{12}^* (u_g \tensor \mu_h \tensor r_g) = W_S (u_g \tensor \mu_h \tensor r_g).
	\end{align*}
\item
	From~\eqref{eq:SOPleft} we deduce that $S \aA \tensor \Com(\hH,L^2(G)) \subseteq \aA \tensor \Com(\hH)$. Combining this with~(\ref{eq:SOPleft}) and the fact that $W_G \in \End(L^2(G)) \tensor \End(L^2(G))$ yields $W_S \in \Mul(\aA \tensor \Com(\hH) \tensor \Com(L^2(G)))$. 
	But because $C^*_r(G)$ is the fixed point algebra of $\Com(L^2(G))$ with respect to $\Ad[\lambda_g]$, $g\in G$, and $W_G$ satisfies~(\ref{eq:WGequivariance}), it may further be concluded that $W_S \in \Mul(\aA \tensor \Com(\hH) \tensor C^*_r(G))$. 
	\qedhere
\end{enumerate}
\end{proof}

Here and subsequently,  we do not distinguish between the algebras $\End(\hH_\aA \tensor \hH \tensor L^2(G))$ and $\End(\hH_\aA \tensor \hH) \tensor \End(L^2(G))$ for simplicity of notation.

\begin{lemma}\label{lem:delta_elementary}
	For the map
	\begin{align*}
		\delta: \End(\hH_\aA \tensor \hH) \to \End(\hH_\aA \tensor \hH \tensor L^2(G)), 
		&&
		\delta(x) : = \Ad[W_S](x \tensor \one_G) := W_S (x \tensor \one_G) W_S^*
	\end{align*}	
	the following assertions hold:
	\begin{enumerate}[label={\arabic*}.,ref=\ref{lem:delta_elementary}.{\textit{\arabic*}}.]
	\item
		$\delta$ is equivariant \wrt $\Ad[u_g \tensor \mu_h]$, $g,h \in G$ on $\End(\hH_\aA \tensor \hH)$ and $\Ad[u_g \tensor \mu_h \tensor r_h]$, $g,h \in G$, on $\End(\hH_\aA \tensor \hH \tensor L^2(G))$.
		\label{lem:deltaequiv} 
	\item
		$\delta(x) \in \Mul(\aB \tensor \Com(\hH) \tensor C^*_r(G))$ for all $x \in \aB \tensor \Com(\hH)$.\label{lem:deltamul}
	\item
		$\delta$ satisfies the comultiplication identity $(\delta \tensor \id) \circ \delta = (\id \tensor \delta_G) \circ \delta$.
\label{lem:deltacomulti}
\end{enumerate}
\end{lemma}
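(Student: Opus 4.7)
My plan is to handle the three items in order, with the bulk of the work going into~\ref{lem:deltacomulti}.

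For~\ref{lem:deltaequiv}, I would simply unfold the definition of $\delta$ and apply~\ref{lem:WSequiv}, in the form $(u_g \tensor \mu_h \tensor r_h)W_S = W_S(u_g \tensor \mu_h \tensor r_g)$ together with its adjoint, to pull the $u_g \tensor \mu_h$ factors through $W_S$ and $W_S^*$. The stray $r_g$ and $r_g^*$ that appear in the middle act only on $\one_G$ and hence cancel, leaving precisely $\Ad[u_g \tensor \mu_h \tensor r_h](\delta(x)) = \delta(\Ad[u_g \tensor \mu_h](x))$.

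For~\ref{lem:deltamul}, I would first observe that $\delta(x) \in \Mul(\aA \tensor \Com(\hH) \tensor C^*_r(G))$ whenever $x \in \aB \tensor \Com(\hH)$: the factor $x \tensor \one_G$ lies in this multiplier algebra (since $x$ is already in $\aA \tensor \Com(\hH)$ and $\one_G$ is the unit multiplier of $C^*_r(G)$), and conjugation by $W_S$ preserves it by~\ref{lem:WSmul}. To strengthen this to $\Mul(\aB \tensor \Com(\hH) \tensor C^*_r(G))$, I would invoke~\ref{lem:deltaequiv} with $h=e$: since $x$ is $\Ad[u_g \tensor \one_\hH]$-invariant, the already proved equivariance forces $\delta(x)$ to be fixed under $\Ad[u_g \tensor \one_\hH \tensor \one_G]$ for every $g \in G$. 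Consequently, for every $z \in \aB \tensor \Com(\hH) \tensor C^*_r(G)$, the products $\delta(x)z$ and $z\delta(x)$ lie in $\aA \tensor \Com(\hH) \tensor C^*_r(G)$ and are $G$-invariant, hence they in fact belong to $\aB \tensor \Com(\hH) \tensor C^*_r(G)$.

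For~\ref{lem:deltacomulti}, my strategy is to reduce the comultiplication identity to a pentagon-type identity for $W_S$, itself derived from~\eqref{eq:WGcocycel}. Unfolding definitions in $\End(\hH_\aA \tensor \hH \tensor L^2(G) \tensor L^2(G))$ with the natural leg assignments yields
\begin{align*}
(\delta \tensor \id)(\delta(x)) &= (W_S)_{123}(W_S)_{124}\, x_{12}\, (W_S)_{124}^*(W_S)_{123}^*, \\
(\id \tensor \delta_G)(\delta(x)) &= (W_G)_{34}^*(W_S)_{123}\, x_{12}\, (W_S)_{123}^*(W_G)_{34}.
\end{align*}
The crucial step is the identity
\[
(W_S)_{123}(W_S)_{124} = (W_G)_{34}^*(W_S)_{123}(W_G)_{34},
\]
which I would derive by substituting $W_S = S_{12}(W_G)_{23}S_{12}^*$, collapsing the inner $S_{12}^* S_{12} = \one$ to obtain $S_{12}(W_G)_{23}(W_G)_{24}S_{12}^*$, and then applying~\eqref{eq:WGcocycel} in the rearranged form $(W_G)_{23}(W_G)_{24} = (W_G)_{34}^*(W_G)_{23}(W_G)_{34}$ together with the disjointness of the legs of $S_{12}$ and $(W_G)_{34}$. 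Plugging this identity into the first display and using that $(W_G)_{34}$ commutes with $x_{12}$ then immediately reduces it to the second.

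The main obstacle is not the mathematics---the substantive input is just the pentagon equation for $W_G$---but the leg-bookkeeping in~\ref{lem:deltacomulti}, since $S$ implicitly toggles the second leg between $\hH$ and $L^2(G)$, making it easy to mis-locate the legs of $(W_S)_{124}$ or to misapply the $S_{12}^* S_{12} = \one$ cancellation.
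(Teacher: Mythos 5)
Your proposal is correct and follows essentially the same route as the paper's proof: part 1 by pulling $u_g\tensor\mu_h$ through $W_S$ via Lemma~\ref{lem:WSequiv}, part 2 by combining Lemma~\ref{lem:WSmul} with the $\Ad[u_g\tensor\one_\hH\tensor\one_G]$-invariance of $\delta(x)z$ and $z\delta(x)$, and part 3 by collapsing $S_{12}^*S_{12}$ and reducing to the rearranged pentagon identity $(W_G)_{23}(W_G)_{24}=(W_G)_{34}^*(W_G)_{23}(W_G)_{34}$ from~\eqref{eq:WGcocycel}. The leg bookkeeping you flag as the main hazard is handled exactly as in the paper, so there is nothing to add.
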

\begin{proof}
\begin{enumerate}[label={\arabic*}.]
\item
	Let $g,h \in G$ and let $x \in \End(\hH_\aA \tensor \hH)$.
	Then 
	\begin{align*}
		\delta(\Ad[u_g \tensor \mu_h](x)) &= \Ad[W_S](\Ad[u_g \tensor \mu_h \tensor \one_G](x \tensor \one_G))
		\\
		&= \Ad[W_S (u_g \tensor \mu_h \tensor \one_G)](x \tensor \one_G)
		\\
		&= \Ad[W_S (u_g \tensor \mu_h \tensor r_g)](x \tensor \one_G)
		\\
		&= \Ad[(u_g \tensor \mu_h \tensor r_h ) W_S](x \tensor \one_G) 
		\\
		&= \Ad[u_g \tensor \mu_h \tensor r_h ](\Ad[W_S](x \tensor \one_G)) = \Ad[u_g \tensor \mu_h \tensor r_h] (\delta(x)),
	\end{align*}
	where we have used Lemma~\ref{lem:WSequiv}~for the third from last equality. 
\item
	Let $x \in \aB \tensor \Com(\hH)$ and let $y \in \aB \tensor \Com(\hH) \tensor C^*_r(G)$. 
	Applying Lemma~\ref{lem:WSmul}, we see that $\delta(x)y \in \aA \tensor \Com(\hH) \tensor C^*_r(G)$.
	Moreover, the equivariance of $\delta$ implies that
	\begin{align*}
		\Ad[u_g \tensor \one_\hH \tensor \one_G](\delta(x)y) &= \Ad[u_g \tensor \one_\hH \tensor \one_G](\delta(x)) \Ad[u_g \tensor \one_\hH \tensor \one_G](y)
		\\
		&= \delta(\Ad[u_g \tensor \one_\hH](x)) y = \delta(x)y
	\end{align*}
	for all $g \in G$, \ie, $\delta(x)y \in \aB \tensor \Com(\hH) \tensor C^*_r(G)$. 
	Likewise, we get $y\delta(x) \in \aB \tensor \Com(\hH) \tensor C^*_r(G)$. 
	It follows that $\delta(x) \in \Mul(\aB \tensor \Com(\hH) \tensor C^*_r(G))$ as required.
\item
	To demonstrate that $(\delta \tensor \id) \circ \delta = (\id \tensor \delta_G) \circ \delta$, we first evaluate both sides of the identity.
	Indeed, for each $x \in \End(\hH_\aA \tensor \hH)$ we have
	\begin{align*}
		\delta \tensor \id (\delta(x)) &= \delta \tensor \id (\Ad[W_S] (x \tensor \one_G))
		\\
		&= \Ad[(W_S)_{123}(W_S)_{124}](x \tensor \one_G) 
		\\
		&= \Ad[S_{12}(W_G)_{23}(W_G)_{24} S_{12}^*](x \tensor \one_G).
	\end{align*}
	Moreover, the right hand side of the identity becomes
	\begin{align*}
		\id \tensor \delta_G (\delta(x)) &= \id \tensor \delta_G (\Ad[W_S](x \tensor \one_G))
		\\
		&= \Ad[(W_G)_{34}^*(W_S)_{123}](x \tensor \one_G)
		\\
		&= \Ad[(W_G)_{34}^*(W_S)_{123}(W_G)_{34}](x \tensor \one_G) 
		\\
		&= \Ad[(W_G)_{34}^*S_{12}(W_G)_{23}S_{12}^*(W_G)_{34}](x \tensor \one_G) 
		\\
		&= \Ad[S_{12}(W_G)_{34}^*(W_G)_{23}(W_G)_{34}S_{12}^*](x \tensor \one_G).
	\end{align*}
	Comparing these two expressions, we now see that the claim will be proved once we show that $(W_G)_{12}(W_G)_{13} = (W_G)_{23}^*(W_G)_{12}(W_G)_{23}$.
	But this is clear from~(\ref{eq:WGcocycel}). 
	\qedhere
\end{enumerate}
\end{proof}

\begin{remark}
    We stress that the map $\delta$ is, in general, not a \Star-homomorphism.
\end{remark}

To proceed, we put $\hH_P := P (\hH_\aA \tensor \hH)$, identify $\End(\hH_P)$ with $P \End(\hH_\aA \tensor \hH) P$ by means of the map $P \End(\hH_\aA \tensor \hH) P \to \End(\hH_P)$, $x \mapsto x |_{\hH_P}^{\hH_P}$, and note that $\aD \subseteq \End(\hH_P)$ is nondegenerate.

\begin{lemma}\label{lem:deltaD}
	Restricting $\delta$ to $\alg D$ (\cf~(\ref{eq:D})) yields a map 
	\begin{equation}
		\delta_\alg D : \alg D \rightarrow \Mul(\alg D \tensor C^*_r(G)) \subseteq \End(\hH_P \tensor L^2(G)), 
	\label{eq:deltaD}
	\end{equation}
for which the following assertions hold:
	\begin{enumerate}[label={\arabic*}.,ref=\ref{lem:deltaD}.{\textit{\arabic*}}.]
	\item
		$\delta_\alg D$ is a \Star-homomorphism.\label{lem:deltaDhom}
	\item
		$\delta_\alg D$ is faithful.\label{lem:deltaDfaith}
	\item
		$\delta_\alg D$ is nondegenerate.\label{lem:deltaDnondeg}
	\item
		$\delta_\alg D$ satisfies the comultiplication identity $(\delta_\alg D \tensor \id) \circ \delta_\alg D = (\id \tensor \delta_G) \circ \delta_\alg D$.\label{lem:deltaDcomulti}
	\item
		$\delta_\alg D$ is equivariant \wrt $\Ad[\one_\aA \tensor \mu_g]$, $g\in G$, on $\alg D$ and  $\Ad[\one_\aA \tensor \mu_g \tensor r_g]$, $g\in G$, on $\Mul(\alg D \tensor C^*_r(G))$.\label{lem:deltaDequiv}
	\end{enumerate}
\end{lemma}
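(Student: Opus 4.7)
The plan is to deduce all five assertions from the structural facts already established in Lemmas~\ref{lem:WS_elementary} and~\ref{lem:delta_elementary}, exploiting repeatedly the partial-isometry identity $W_S^* W_S = W_S W_S^* = P \tensor \one_G$ from Lemma~\ref{lem:WSpariso}. From this identity one extracts $(P \tensor \one_G)W_S = W_S = W_S(P \tensor \one_G)$ and, consequently, $\delta(x) = (P \tensor \one_G)\delta(x)(P \tensor \one_G)$ for every $x \in \End(\hH_\aA \tensor \hH)$.

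Before turning to the five items, I would settle the codomain of $\delta_\aD$. Combining Lemma~\ref{lem:deltamul} with the identification $\aD \tensor C^*_r(G) = (P \tensor \one_G)\bigl(\aB \tensor \Com(\hH) \tensor C^*_r(G)\bigr)(P \tensor \one_G)$ and the corner identity above, one checks that $\delta(x) y$ and $y \delta(x)$ both lie in $\aD \tensor C^*_r(G)$ whenever $x \in \aD$ and $y \in \aD \tensor C^*_r(G)$, so $\delta_\aD(\aD) \subseteq \Mul(\aD \tensor C^*_r(G))$. Items (4) and (5) are then direct restrictions of Lemma~\ref{lem:deltacomulti} and of Lemma~\ref{lem:deltaequiv} (the latter specialized to $g = e$, which is legitimate because $\aD \subseteq \aB \tensor \Com(\hH)$ is pointwise fixed by $\Ad[u_g \tensor \one_\hH]$, $g \in G$); that both restrictions land in the correct multiplier algebras follows from the same corner argument.

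For (1) I would compute
\[
	\delta(x)\delta(y) = W_S(x \tensor \one_G)(W_S^* W_S)(y \tensor \one_G)W_S^* = W_S\bigl(xPy \tensor \one_G\bigr)W_S^* = \delta(xy),
\]
where the last equality uses $xPy = xy$ for $x, y \in \aD = P(\aB \tensor \Com(\hH))P$; the involutive behavior $\delta(x)^* = \delta(x^*)$ is immediate. For (2), the vanishing $\delta(x) = 0$ gives $0 = W_S^*\delta(x)W_S = (P \tensor \one_G)(x \tensor \one_G)(P \tensor \one_G) = PxP \tensor \one_G$, so $x = PxP = 0$.

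The main technical step is nondegeneracy (3). Taking an approximate unit $(e_\lambda) \subseteq \aD$, I would show $\delta_\aD(e_\lambda) y \to y$ in norm for every $y \in \aD \tensor C^*_r(G)$. The strategy is to rewrite $\delta_\aD(e_\lambda) y = W_S(e_\lambda \tensor \one_G)W_S^* y$, to locate $W_S^* y$ inside $(P \tensor \one_G)\bigl(\aA \tensor \Com(\hH) \tensor C^*_r(G)\bigr)(P \tensor \one_G)$ via Lemma~\ref{lem:WSmul} and the identity $W_S^*(P \tensor \one_G) = W_S^*$, and then to apply the approximate-unit property of $(e_\lambda)$ on the relevant slices (exploiting the nondegeneracy of $\aD \subseteq \End(\hH_P)$) to conclude $(e_\lambda \tensor \one_G) W_S^* y \to (P \tensor \one_G) W_S^* y = W_S^* y$ in norm. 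Premultiplying by $W_S$ then delivers $\delta_\aD(e_\lambda) y \to W_S W_S^* y = (P \tensor \one_G) y = y$. I expect the norm-approximation step inside this argument to be the chief obstacle, while the other four items fall out as short consequences of the partial-isometry relations for $W_S$.
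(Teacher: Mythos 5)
Your proposal follows the same route as the paper's proof: well-definedness of the codomain via the corner identity $\delta(x) = (P \tensor \one_G)\delta(x)(P \tensor \one_G)$ combined with Lemma~\ref{lem:deltamul}; multiplicativity and faithfulness from $W_S^*W_S = W_SW_S^* = P \tensor \one_G$; and items (4) and (5) by restriction of Lemmas~\ref{lem:deltacomulti} and~\ref{lem:deltaequiv}. Two points deserve comment. First, in item (4) the composition $(\delta_\aD \tensor \id)\circ\delta_\aD$ only makes sense after extending $\delta_\aD \tensor \id$ from $\aD \tensor C^*_r(G)$ to $\Mul(\aD \tensor C^*_r(G))$, which requires nondegeneracy; so (4) is not a "direct restriction" independent of (3) — the paper uses the uniqueness of the strictly continuous extension to identify it with the restriction-corestriction of $\delta \tensor \id$ and only then invokes Lemma~\ref{lem:deltacomulti}.

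Second, and more substantively, the justification you give for the norm convergence $(e_\lambda \tensor \one_G)W_S^*y \to W_S^*y$ in item (3) does not suffice as stated: nondegeneracy of $\aD \subseteq \End(\hH_P)$ as a representation yields only strong operator convergence, not convergence in norm. The paper sidesteps this by choosing the specific approximate unit $e_i := P(\one_\aB \tensor T_i)P$ with $(T_i)$ an approximate unit of $\Com(\hH)$; then $\delta(e_i)y = W_S(\one_\aA \tensor T_i \tensor \one_G)W_S^*y$, and since $W_S^*y \in \aA \tensor \Com(\hH) \tensor C^*_r(G)$ by Lemma~\ref{lem:WSmul} and $\one_\aA \tensor T_i \tensor \one_G$ is an approximate unit of that C\Star-algebra, the norm convergence is immediate. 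Your claim does hold for an arbitrary approximate unit of $\aD$, but to see this one must first check that $(P\tensor\one_G)\bigl(\aA \tensor \Com(\hH) \tensor C^*_r(G)\bigr)(P\tensor\one_G)$ is an \emph{essential} Banach left $\aD$-module (e.g.\ via the above computation and Cohen factorization); with that observation supplied, your argument closes and coincides in substance with the paper's.
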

\begin{proof}
\begin{enumerate}[start=0,label={\arabic*}.]
\item
	We first show that $\delta_\alg D$ is well-defined. 
	For this purpose, let $x=PyP \in \aD$ for some $y \in \aB \tensor \Com(\hH)$ and let $z \in \alg D \tensor C^*_r(G) \subseteq \aB \tensor \Com(\hH) \tensor C^*_r(G)$. 
	By Lemma~\ref{lem:deltamul},  $\delta(y) z \in \aB \tensor \Com(\hH) \tensor C^*_r(G)$. 
	Moreover,  $\delta(x)z = (P \tensor 1_G) \delta(y)  (P \tensor 1_G) z = (P \tensor 1_G) \delta(y) z (P \tensor 1_G)$.
	Consequently, $\delta(x)z \in (P \tensor 1_G) \aB \tensor \Com(\hH) \tensor C^*_r(G) (P \tensor 1_G) = \alg D \tensor C^*_r(G)$. 
	Because the same conclusion can be drawn for the element $z\delta(x)$, it follows that $\delta(x) \in \Mul(\alg D \tensor C^*_r(G))$.
\item 
	It is immediate that $\delta_\alg D$ is a \Star-map.
	To see that $\delta_\alg D$ is an algebra map,  let $x,y \in \alg D$. 
	Then
	\begin{align*}
		\delta_{\alg D}(xy) &= W_S  (xy \tensor \one_G ) W_S^* = W_S  (xPy \tensor \one_G ) W_S^* 
		\\ 
		&= W_S (x \tensor \one_G) (P \tensor \one_G) (y \tensor \one_G ) W_S^*
		\\ 
		&=\underbrace{W_S (x \tensor \one_G) W_S^*}_{=\delta_{\alg D}(x)} \underbrace{W_S (y \tensor \one_G ) W_S^*}_{=\delta_{\alg D}(y)}.  
	\end{align*}
\item
	Let $x \in \alg D$ such that $\delta(x) =0$. 
	Multiplying the latter equation by $W_S^*$ from the left and by $W_S$ from the right gives $PxP \tensor \one_G = x \tensor \one_G = 0$.
	Hence $x=0$ as required.
\item
	Let $\{ T_i \}$ be a bounded approximate identity for $\Com(\hH)$.
	Then $\{\delta(P(\one_\aB \tensor T_i)P)\}$ is a bounded approximate identity for $\alg D \tensor C^*_r(G)$, \ie, $\delta(P(\one_\aB \tensor T_i)P)$ converges to $\one$ in the strict topology of $\Mul (\alg D \tensor C^*_r(G))$. 
	In particular, $\delta(\alg D) (\alg D \tensor C^*_r(G))$ is dense in $\alg D \tensor C^*_r(G)$ which amounts to saying that $\delta_\aD$ is nondegenerate. 
	As a matter of fact, we even have equality by the Cohen factorization theorem (see, \eg,~\cite[Prop. 2.33]{Rae98}).
\item
	Since $\delta_\aD$ is nondegenerate, $\delta_D \tensor \id : \aD \tensor C^*_r(G) \to \Mul(\aD \tensor C^*_r(G) \tensor C^*_r(G))$ uniquely extends to a \Star-homomorphisms with domain $\Mul(\aD \tensor C^*_r(G))$ which, by uniqueness, must agree with the restriction-corestriction
	\begin{equation*}
		\delta \tensor \id |_{\Mul(\aD \tensor C^*_r(G))}^{\Mul(\aD \tensor C^*_r(G) \tensor C^*_r(G))}.
	\end{equation*}
	This establishes the statement when combined with Lemma~\ref{lem:deltacomulti}
\item
	By Lemma~\ref{lem:deltaequiv}, it suffices to note that $\aD$ is invariant under $\Ad[\one_\aA \tensor \mu_g]$, $g\in G$, and that $\Mul(\aD \tensor C^*_r(G))$ is invariant under $\Ad[\one_\aA \tensor \mu_g \tensor r_g]$, $g\in G$.
	\qedhere
	\end{enumerate}
\end{proof}

\begin{remark}\label{rem:deltaD}
	Much the same proof as above also works for the restriction of $\delta$ to $\End(\hH_P)$ in domain and codomain, \ie, the map $\End(\hH_P) \to \End(\hH_P \tensor L^2(G))$, $x \mapsto \delta(x)$. 
\end{remark}

\begin{corollary}\label{cor:deltaD}
	$\delta_\aD$ is a coaction of $G$ on $\aD$ that is equivariant \wrt $\Ad[\one_\aA \tensor \mu_g]$, $g\in G$, on $\alg D$ and $\Ad[\one_\aA \tensor \mu_g \tensor r_g]$, $g\in G$, on $\Mul(\alg D \tensor C^*_r(G))$.
\end{corollary}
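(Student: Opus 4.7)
The corollary is essentially a packaging result: all the analytic and algebraic content has already been established in Lemma~\ref{lem:deltaD}, and what remains is to observe that these pieces fit together exactly to the definition of an equivariant coaction recalled in Section~\ref{sec:coaction}. So my plan is to simply match the items \ref{lem:deltaDhom}--\ref{lem:deltaDequiv} of Lemma~\ref{lem:deltaD} against the conditions demanded by the definition.

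Concretely, I would argue as follows. First, the well-definedness statement established at the very beginning of the proof of Lemma~\ref{lem:deltaD} shows that $\delta_\aD$ maps into $\Mul(\aD \tensor C^*_r(G))$. Combining this with Lemma~\ref{lem:deltaDhom} (which gives that $\delta_\aD$ is a $^*$-homomorphism), Lemma~\ref{lem:deltaDfaith} (faithfulness), and Lemma~\ref{lem:deltaDnondeg} (nondegeneracy), we see that $\delta_\aD$ is a faithful nondegenerate $^*$-homomorphism $\aD \to \Mul(\aD \tensor C^*_r(G))$, which is precisely the underlying datum of a coaction. The coaction identity~\eqref{eq:coactcon} is the content of Lemma~\ref{lem:deltaDcomulti}, where the appearance of the canonical extension of $\delta_\aD \tensor \id$ to $\Mul(\aD \tensor C^*_r(G))$ is justified within the proof of that item via nondegeneracy. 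Hence $\delta_\aD$ is a coaction of $G$ on $\aD$ in the sense of Section~\ref{sec:coaction}.

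Finally, the asserted equivariance of $\delta_\aD$ with respect to $\Ad[\one_\aA \tensor \mu_g]$ on $\aD$ and $\Ad[\one_\aA \tensor \mu_g \tensor r_g]$ on $\Mul(\aD \tensor C^*_r(G))$ is exactly the statement of Lemma~\ref{lem:deltaDequiv}. The only point worth flagging is that one should check the extension of the adjoint actions to the multiplier algebra is well-defined and strictly continuous, but this is standard (see, e.g., the discussion of $\bar{\alpha}$ in Remark~\ref{rem:cleft}) and requires no further work. I do not expect any genuine obstacle here; the whole argument is a bookkeeping exercise, and the only potential pitfall would be to forget to invoke the multiplier-extension step when verifying the coaction identity, which has already been handled inside Lemma~\ref{lem:deltaDcomulti}.
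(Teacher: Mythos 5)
Your proposal is correct and matches the paper's intent exactly: the paper states Corollary~\ref{cor:deltaD} without proof, precisely because it is the bookkeeping exercise you describe, obtained by matching the well-definedness statement and items \ref{lem:deltaDhom}--\ref{lem:deltaDequiv} of Lemma~\ref{lem:deltaD} against the definition of an equivariant coaction from Section~\ref{sec:coaction}. Nothing is missing.
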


The task is now to recover $(\aA,G,\alpha)$ from~$\delta_\aD$. 
For this purpose, we recall that $\Mul(\aD)$ may be identified with $P \Mul(\aB \tensor \Com(\hH)) P$ (\cf~\cite[Sec. II.7.3.14]{BB12})
and consider $\delta_\aD$'s fixed point algebra:
\begin{equation*}
	\fix(\delta_\aD) := \{ x \in \Mul(\aD) : \delta_\aD(x) = x \tensor \one_G \} \subseteq \End(\hH_P).
\end{equation*} 
Since $P \in \Mul(\aD)$ and $\delta_\aD(P) = P \tensor \one_G$, we see that $\fix(\delta_\aD)$ is a unital C\Star-subalgebra of $\Mul(\aD)$.
Furthermore, $\fix(\delta_\aD)$ is invariant under $\Ad [\one_\aA \tensor \mu_g]$, $g\in G$, as is easy to check.
Unfortunately, the resulting action $G \to \Aut(\fix(\delta_\aD))$ is unlikely to be continuous for the norm topology on~$\Mul(\aD)$. The best we can do is say that it is continuous for the strict topology. 
This inconvenience is simplest avoided by only taking into account those elements $x \in \fix(\delta_\aD)$ for which the map $G \to \fix(\delta_\aD)$, $g \mapsto \Ad [\one_\aA \tensor \mu_g](x)$ is norm continuous, \ie, 
	\begin{equation*}
		\tilde{\aA} := \{ x \in \fix(\delta_\aD) : G \ni g \mapsto \Ad[\one_\aA \tensor \mu_g](x) \in \fix(\delta_\aD) \,\,\, \text{norm continuous} \}.
	\end{equation*}
Now, a straightforward verification shows that $\tilde{\aA}$ is a unital C\Star-subalgebra of $\fix(\delta_\aD)$ with unit $\one_{\tilde{\aA}} := P$ on which $\Ad [\one_\aA \tensor \mu_g]$, $g \in G$, acts strongly continuous. 
In particular, writing $\tilde{\alpha}_g$,~$g \in G$, for the restriction of $\Ad [\one_\aA \tensor \mu_g]$, $g \in G$, to $\tilde{\aA}$ in both domain and codomain, we conclude that $(\tilde{\aA},G,\tilde{\alpha})$ is a concrete C\Star-dynamical system on $\hH_P$.

\begin{lemma}\label{lem:reconstruction}
	For the element $\tilde{S} := S_{12} S_{13} (W_G)_{23}^* S_{12}^* \in \End(\hH_P \tensor L^2(G), \hH_P \tensor \hH)$
	the following assertions hold:
	\begin{enumerate}[label={\arabic*}.,ref=\ref{lem:reconstruction}.{\textit{\arabic*}}.]
	\item
		$\tilde{S}$ is an isometry satisfying $\delta \tensor \id (\tilde{S}) = \tilde{S}_{124}$.\label{lem:reconstructionCOISO}
	\item
		$(u_g \tensor \mu_h \tensor \mu_k) \tilde{S} = \tilde{S} (u_g \tensor \mu_h \tensor r_h \lambda_k)$ for all $g,h,k \in G$.\label{lem:reconstructionEQUiV}
	\item
		$\tilde{S} \aD \tensor \Com(L^2(G)) \subseteq \aD \tensor \Com(L^2(G),\hH)$ and $\aD \tensor \Com(\hH) \tilde{S} \subseteq \aD \tensor \Com(L^2(G),\hH)$, \ie, 
		\begin{equation*}
			\tilde{S} \in \Mul(\aD \tensor \Com(L^2(G) \oplus \hH))
		\end{equation*}
		with $(1_{\hH_P} \tensor p_{L^2(G)}) \tilde{S} = 0 = \tilde{S} (1_{\hH_P} \tensor p_{\hH})$, where $p_{\hH}$ and $p_{L^2(G)}$ denote the canonical projections onto $\hH$ and $L^2(G)$ respectively (\cf Remark~\ref{rem:SOPright}).\label{lem:reconstructionMULTI}
	\item
		$\tilde{S} \Mul(\aD) \tensor \Com(L^2(G)) \subseteq \Mul(\aD) \tensor \Com(L^2(G),\hH)$.\label{lem:reconstructionSUBMULTI}
		\end{enumerate}
\end{lemma}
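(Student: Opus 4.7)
Let $m \in \Mul(\aD)$ and $k \in \Com(L^2(G))$, and set $T := \tilde{S}(m \tensor k)$. The plan is to deduce the claim from Lemma~\ref{lem:reconstructionMULTI} by using Lemma~\ref{lem:multipliers} to identify the spatial tensor product $\Mul(\aD) \tensor \Com(L^2(G), \hH)$ inside the larger multiplier algebra.

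First I would verify that $T \in \Mul(\aD \tensor \Com(L^2(G) \oplus \hH))$ with the corner conditions $(1 \tensor p_{L^2(G)}) T = 0 = T(1 \tensor p_\hH)$. Indeed, $\tilde{S}$ is such a multiplier by Lemma~\ref{lem:reconstructionMULTI}, while $m \tensor k \in \Mul(\aD) \tensor \Com(L^2(G)) \subseteq \Mul(\aD \tensor \Com(L^2(G) \oplus \hH))$ via the standard inclusion of tensor products of multiplier algebras. Hence $T$, as a product of two multipliers, is itself a multiplier and inherits the corner conditions from $\tilde{S}$.

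Next, applying Lemma~\ref{lem:multipliers} with $\aA := \aD$ (faithfully and non-degenerately represented on $\hH_P$) and $\aB := \Com(L^2(G) \oplus \hH)$ yields
\[
\Mul(\aD) \tensor \Com(L^2(G) \oplus \hH) = \bigl( \End(\hH_P) \tensor \Com(L^2(G) \oplus \hH) \bigr) \cap \Mul\bigl( \aD \tensor \Com(L^2(G) \oplus \hH) \bigr) .
\]
Combined with the corner identification $\Mul(\aD) \tensor \Com(L^2(G), \hH) = (1 \tensor p_\hH) \bigl( \Mul(\aD) \tensor \Com(L^2(G) \oplus \hH) \bigr) (1 \tensor p_{L^2(G)})$, the problem reduces to showing $T \in \End(\hH_P) \tensor \Com(L^2(G) \oplus \hH)$.

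For this remaining spatial tensor condition I would approximate by compactness. Let $(f_n)$ be an approximate unit of $\Com(L^2(G))$ consisting of finite-rank projections. Compactness of $k$ gives $\| k - k f_n \| \to 0$, so that
\[
\| T - T(1 \tensor f_n) \| = \| \tilde{S}(m \tensor (k - k f_n)) \| \le \| \tilde{S} \| \, \| m \| \, \| k - k f_n \| \to 0 ,
\]
reducing the claim to showing each $T(1 \tensor f_n) = \tilde{S}(m \tensor k f_n) \in \End(\hH_P) \tensor \Com(L^2(G), \hH)$. Writing $k f_n$ as a finite linear combination of rank-one operators $\ketbra{u_i}{v_i}$, each summand $\tilde{S}(m \tensor \ketbra{u_i}{v_i})$ factors as $Y_i \circ (1 \tensor \langle v_i |)$, where $Y_i : \hH_P \to \hH_P \tensor \hH$ sends $\xi$ to $\tilde{S}(m\xi \tensor u_i)$. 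The main obstacle is to upgrade this factorization to a norm-convergent expansion $\sum_\beta \alpha_{i\beta} \tensor \ketbra{w_\beta}{v_i}$ with $\alpha_{i\beta} \in \End(\hH_P)$ and $w_\beta \in \hH$, which then manifestly lies in $\End(\hH_P) \tensor \Com(L^2(G), \hH)$; this should follow from the finite-dimensional multiplicity spaces of $\mu : G \to \mathcal{U}(\hH)$ combined with the equivariance of $\tilde{S}$ from Lemma~\ref{lem:reconstructionEQUiV}, which together force $Y_i$ to have essentially finite-rank structure in the $\hH$-direction.
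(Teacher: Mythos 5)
Your proposal addresses only the fourth assertion of the lemma. Assertions 1--3 --- that $\tilde S$ is an isometry with $\delta\tensor\id(\tilde S)=\tilde S_{124}$, the equivariance relation, and the multiplier property $\tilde S\in\Mul(\aD\tensor\Com(L^2(G)\oplus\hH))$ together with the corner conditions --- are nowhere established; indeed you invoke the second and third as known inputs. This is not a cosmetic omission: assertion 3, which the paper obtains by rewriting $\tilde S=S_{12}S_{13}S_{12}^*W_S^*$ and combining Lemma~\ref{lem:WSmul} with~\eqref{eq:SOPleft} and the $\Ad[u_g\tensor\one_\hH]$-invariance of $\aD$, is exactly the ingredient your argument leans on, and assertion 1 needs the nontrivial observation that $W_G\in L^\infty(G)\tensor\End(L^2(G))$, so that $(W_G)_{23}$ and $(W_G)_{24}$ commute. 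As a proof of the lemma as stated, the attempt is therefore incomplete.

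For assertion 4 itself your skeleton coincides with the paper's: apply Lemma~\ref{lem:multipliers} (with $\aD$ represented nondegenerately on $\hH_P$ and with $\Com(L^2(G)\oplus\hH)$) to reduce to the two memberships $T\in\Mul(\aD\tensor\Com(L^2(G)\oplus\hH))$ and $T\in\End(\hH_P)\tensor\Com(L^2(G)\oplus\hH)$, the first of which follows from assertion 3 as you say. The paper declares the second membership clear; you rightly treat it as the crux, but your write-up stops at ``this should follow from the finite-dimensional multiplicity spaces,'' and that sentence carries the entire load: for a general bounded $Y\colon\hH_P\to\hH_P\tensor\hH$ the expansion $\sum_\beta\alpha_\beta\tensor\ketbra{w_\beta}{v}$ does \emph{not} converge in norm, so $Y\circ(\one\tensor\langle v\rvert)$ need not lie in the spatial tensor product. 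The mechanism you name does close the gap, but it must actually be run: by Peter--Weyl you may take the vectors $u_i$ inside finite sums of $\lambda$-isotypic components of $L^2(G)$, and the relation $(\one\tensor\one\tensor\mu_k)\tilde S=\tilde S(\one\tensor\one\tensor\lambda_k)$ from assertion 2 then forces $\xi\mapsto\tilde S(m\xi\tensor u_i)$ to take values in $\hH_P\tensor\hH(\sigma)$ with $\hH(\sigma)$ the finite-dimensional $\sigma$-isotypic subspace of $(\hH,\mu)$, so the expansion is a finite sum. Alternatively, and closer to what makes the step ``clear'' in the paper, note that $W_G^*(\one\tensor k)\in C(G)\tensor\Com(L^2(G))$ for compact $k$ (norm continuity of $g\mapsto r_g^*k$), whence $S_{13}(W_G)_{23}^*k_3\in\End(\hH_\aA\tensor L^2(G))\tensor\Com(L^2(G),\hH)$ by~\eqref{eq:SOPleft}, and the remaining factors $S_{12}$ and $(S^*m)_{12}$ act only on the first two legs; this avoids the isotypic decomposition altogether.
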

\begin{proof}
\begin{enumerate}[label={\arabic*}.]\item
	We have $\tilde{S}^*\tilde{S} = S_{12} (W_G)_{23} S_{13}^* S_{12}^* S_{12} S_{13} (W_G)_{23}^* S_{12}^* = S_{12} S_{12}^* = \one_{\tilde{\aA}} \tensor \one_G$, \ie, $\tilde{S}$ is an isometry. 
	Furthermore, it is easily deduced that
	\begin{align*}
		\delta \tensor \id (\tilde{S}) &= \Ad[ S_{12} (W_G)_{23} S_{12}^*] (\tilde{S}_{124}) 
		\\
		&=S_{12} (W_G)_{23} S_{12}^* S_{12} S_{14} (W_G)_{24}^* S_{12}^* S_{12} (W_G)_{23}^* S_{12}^*				
		\\
		&= S_{12} S_{14} (W_G)_{23} (W_G)_{24}^* (W_G)_{23}^* S_{12}^*
		\\
		&= S_{12} S_{14} (W_G)_{24}^* S_{12}^* = \tilde{S}_{124},
	\end{align*}
	where the second last equality is due to the fact that $W_G \in L^\infty(G) \tensor \End(L^2(G))$. 
\item
	Let $g,h,k \in G$. 
	Then 
	\begin{align*}
		(u_g \tensor \mu_h  \tensor \mu_k) \tilde{S} &= (u_g \tensor \mu_h  \tensor \mu_k) S_{12} S_{13} (W_G)_{23}^* S_{12}^*
		\\
		\overset{\eqref{eq:SOPequivariance},\eqref{eq:SOPcommuting}}&{=}
		S_{12} S_{13} (u_g \tensor r_g \lambda_h  \tensor r_g \lambda_k) (W_G)_{23}^* S_{12}^*
		\\
		\overset{\eqref{eq:WGright},\eqref{eq:WGequivariance}, \eqref{eq:WGcommuting}}&{=}
		S_{12} S_{13} (W_G)_{23}^* (u_g \tensor r_g \lambda_h  \tensor r_h \lambda_k) S_{12}^*
		\\
		\overset{\eqref{eq:SOPequivariance},\eqref{eq:SOPcommuting}}&{=}
		S_{12} S_{13} (W_G)_{23}^* S_{12}^* (u_g \tensor \mu_h  \tensor r_h \lambda_k) = \tilde{S} (u_g \tensor \mu_h  \tensor r_h \lambda_k).
	\end{align*}
\item
	Using successively the identity $\tilde{S} = S_{12} S_{13} S_{12}^* W_S^*$, the inclusion $\aD \subseteq \aA \tensor \Com(\hH)$, Lemma~\ref{lem:WSmul}, and three times~\eqref{eq:SOPleft} yields
	\begin{align*}
		\tilde{S} \aD \tensor \Com(L^2(G)) &= S_{12} S_{13} S_{12}^* W_S\aD \tensor \Com(L^2(G)) 
		\\
		&\subseteq S_{12} S_{13} S_{12}^*  \aA \tensor \Com(\hH) \tensor \Com(L^2(G))
		\\
		&\subseteq \aA \tensor \Com(\hH) \tensor \Com(L^2(G),\hH).
	\end{align*}
	As $(u_g \tensor \one_\hH  \tensor \one_\hH) \tilde{S} x= \tilde{S} x (u_g \tensor \one_\hH  \tensor \one_G)$ for all $g \in G$ and $x \in \aD \tensor \Com(L^2(G))$, which is due to the second part of the lemma and the fact that $\aD$ is fixed by $\Ad[u_g \tensor \one_\hH]$, $g \in G$, we further conclude that
	\begin{align*}
		\tilde{S} \aD \tensor \Com(L^2(G)) \subseteq \aB \tensor \Com(\hH) \tensor \Com(L^2(G),\hH),
	\end{align*}
	and hence that
	\begin{align*}
		\tilde{S} \aD \tensor \Com(L^2(G)) &= (P)_{12} \tilde{S} \aD \tensor \Com(L^2(G)) (P)_{12}
		\\
		&\subseteq (P)_{12} \aB \tensor \Com(\hH) \tensor \Com(L^2(G),\hH) (P)_{12} 
		= \aD \tensor \Com(L^2(G),\hH).
	\end{align*}
	In the same manner we can see that $\aD \tensor \Com(\hH) \tilde{S} \subseteq \aD \tensor \Com(L^2(G),\hH)$.
	From this it follows that
$\tilde{S} \in \Mul(\aD \tensor \Com(L^2(G) \oplus \hH))$ with $(1_{\hH_P} \tensor p_{L^2(G)}) S = 0 =S (1_{\hH_P} \tensor p_\hH)$ as claimed.				
\item
	This is clear from Lemma~\ref{lem:multipliers}, because $\tilde{S} \Mul(D)  \tensor \Com(L^2(G)) \subseteq \End(\hH_P) \tensor \Com(L^2(G) \oplus \hH)$ and 
	\begin{align*}
		\tilde{S} \Mul(D) \tensor \Com(L^2(G)) &\subseteq \Mul(\aD \tensor \Com(L^2(G) \oplus \hH)) \Mul(\aD \tensor \Com(L^2(G) \oplus \hH))
		\\
		&\subseteq \Mul(\aD \tensor \Com(L^2(G) \oplus \hH))
	\end{align*}
	by the third part of the lemma.
	\qedhere
\end{enumerate}
\end{proof}

\begin{corollary}\label{cor:reconstruction}
	The C\Star-dynamical system $(\tilde{\aA},G,\tilde{\alpha})$ is free.
\end{corollary}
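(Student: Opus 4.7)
The plan is to apply Lemma~\ref{lem:coisometry}, specifically the implication (b)$\Rightarrow$(a), to $(\tilde\aA, G, \tilde\alpha)$. The natural faithful covariant representation is the tautological inclusion $\tilde\aA \hookrightarrow \End(\hH_P)$ together with the unitaries $v_g := (\one_\aA \tensor \mu_g)\big|_{\hH_P}$, which are well-defined because $\hH_P$ is $\mu$-invariant and which implement $\tilde\alpha_g$ by construction. The representation $\mu$ already has finite-dimensional multiplicity spaces by hypothesis, and the element $\tilde S$ from Lemma~\ref{lem:reconstruction} is the natural candidate isometry $\hH_P \tensor L^2(G) \to \hH_P \tensor \hH$, since $\tilde S^* \tilde S = P \tensor \one_G$ is checked in Lemma~\ref{lem:reconstructionCOISO}.

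The equivariance condition~\eqref{eq:SOPequivariance} and the commuting condition~\eqref{eq:SOPcommuting} should read off directly from Lemma~\ref{lem:reconstructionEQUiV} by specializing two of the three group variables to the identity. The only substantial step is the module condition~\eqref{eq:SOPleft}: one needs $\tilde S (\tilde\aA \tensor \Com(L^2(G))) \subseteq \tilde\aA \tensor \Com(L^2(G), \hH)$, whereas Lemma~\ref{lem:reconstructionSUBMULTI} a priori places $\tilde S(x \tensor k)$ only into $\Mul(\aD) \tensor \Com(L^2(G), \hH)$ for $x \in \tilde\aA$ and $k \in \Com(L^2(G))$.

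To upgrade the first tensor leg from $\Mul(\aD)$ to $\tilde\aA$, I would run a slicing argument. For each bounded linear functional $\phi$ on $\Com(L^2(G) \oplus \hH)$, set $y_\phi := (\id \tensor \phi)(\tilde S (x \tensor k)) \in \Mul(\aD)$ and check that $y_\phi$ lies in $\tilde\aA$ by verifying (i) invariance of $y_\phi$ under $\delta_\aD$ and (ii) norm continuity of $g \mapsto \Ad[\one_\aA \tensor \mu_g](y_\phi)$. For (i), reinterpret Lemma~\ref{lem:reconstructionCOISO} in the multiplier algebra of $\aD \tensor C^*_r(G) \tensor \Com(L^2(G) \oplus \hH)$ as $(\delta_\aD \tensor \id)(\tilde S) = \tilde S_{13}$ (with $\one_G$ in the $C^*_r(G)$-leg) and combine it with $\delta_\aD(x) = x \tensor \one_G$ to obtain $(\delta_\aD \tensor \id)(\tilde S (x \tensor k)) = (\tilde S (x \tensor k))_{13}$; slicing by $\phi$ then yields $\delta_\aD(y_\phi) = y_\phi \tensor \one_G$. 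For (ii), the equivariance relations in Lemma~\ref{lem:reconstructionEQUiV} translate $\Ad[\one_\aA \tensor \mu_g]$ applied to $\tilde S (x \tensor k)$ into a conjugation involving $\tilde\alpha_g(x)$, which is norm continuous in $g$ since $x \in \tilde\aA$, together with conjugations by $r_g$ or $\lambda_g$ acting on the compact operator on the second leg, which are automatically norm continuous. Once (i) and (ii) are established for every $\phi$, the slice map property for tensor products with $\Com(L^2(G) \oplus \hH)$ forces $\tilde S (x \tensor k) \in \tilde\aA \tensor \Com(L^2(G), \hH)$, as required.

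The step I expect to be most delicate is the continuity verification in (ii), since one has to carefully translate identities that live at the multiplier level and in the strict topology into the norm continuity that characterizes membership in $\tilde\aA$ rather than merely $\fix(\delta_\aD)$.
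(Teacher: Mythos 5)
Your proposal is correct and follows essentially the same route as the paper: apply Lemma~\ref{lem:coisometry} with the isometry $\tilde{S}$, read off~\eqref{eq:SOPequivariance} and~\eqref{eq:SOPcommuting} from Lemma~\ref{lem:reconstructionEQUiV}, and upgrade the containment from Lemma~\ref{lem:reconstructionSUBMULTI} to $\tilde{\aA} \tensor \Com(L^2(G),\hH)$ by combining $\delta_\aD \tensor \id(\tilde{S}x) = (\tilde{S}x)_{124}$ with the norm continuity of $g \mapsto \Ad[\one_\aA \tensor \mu_g] \tensor \id(\tilde{S}x)$. The only cosmetic difference is your detour through slice maps $(\id \tensor \phi)$, which is valid (the compacts are nuclear, so the slice map property holds) but unnecessary, since the two verifications you perform slice-wise already go through directly for the element $\tilde{S}x$ itself, exactly as in the paper.
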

\begin{proof}
	Our strategy of proof is to apply Lemma~\ref{lem:coisometry}.
	Due to the first two statements of Lemma~\ref{lem:reconstruction}, we can already assert that $\tilde{S} \in \End(\hH_P \tensor L^2(G), \hH_P \tensor \hH)$ is an isometry satisfying~\eqref{eq:SOPequivariance} and~\eqref{eq:SOPcommuting}.
	What is left is to establish that $\tilde{S} \tilde{\aA} \tensor \Com(L^2(G)) \subseteq \tilde{\aA} \tensor \Com(L^2(G),\hH)$. 
	For this purpose, let $x \in \tilde{\aA} \tensor \Com(L^2(G))$. 
	By Lemma~\ref{lem:reconstructionSUBMULTI}, we have $\tilde{S} x \in \Mul(\aD) \tensor \Com(L^2(G),\hH)$, and hence we may apply the map $\delta_\aD \tensor \id$ which gives $\delta_\aD \tensor \id (\tilde{S} x) = \delta_\aD \tensor \id (\tilde{S}) \delta_\aD \tensor \id (x) = \tilde{S}_{124} x_{124} = (\tilde{S} x)_{124}$ when combined with Lemma~\ref{lem:reconstructionCOISO}
	That is, $\tilde{S} x \in \fix(\delta_\aD) \tensor \Com(L^2(G),\hH)$.
	To see that, in fact, $\tilde{S} x \in \tilde{\aA} \tensor \Com(L^2(G),\hH)$, we prove that the map $G \ni g \mapsto \Ad [\one_\aA \tensor \mu_g] \tensor \id (\tilde{S} x) \in \fix(\delta_\aD) \tensor \Com(L^2(G),\hH)$ is norm continuous.
	For the latter assertion, we use the identity
	\begin{align*}
		\Ad [\one_\aA \tensor \mu_g] \tensor \id (\tilde{S} x) = \tilde{S} (\one_\aA \tensor \one_\hH  \tensor r_g) \Ad [\one_\aA \tensor \mu_g \tensor \one_G](x),
		&&
		g \in G,
	\end{align*} 
	which is a consequence of Lemma~\ref{lem:reconstructionEQUiV}, together with the immediate norm continuity of the maps $G \ni g \mapsto \Ad [\one_\aA \tensor \mu_g \tensor \one_G](x)$ and $G \ni g \mapsto (\one_\aA \tensor \one_\hH  \tensor r_g) y$, $y \in \End(\hH_P) \tensor \Com(L^2(G))$.
\end{proof}

The fixed point algebra of $(\tilde{\aA},G,\tilde{\alpha})$ is characterized by our next result.

\begin{lemma}\label{lem:fixgamma}
	The fixed point algebra of the C\Star-dynamical system $(\tilde{\aA},G,\tilde{\alpha})$ is the image of the faithful unital \Star-homomorphism $\gamma : \aB \rightarrow \tilde{\aA}$, $\gamma(b) := S(b \tensor \one_G) S^*$.
\end{lemma}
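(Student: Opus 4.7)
I will handle the two inclusions $\gamma(\aB) \subseteq \fix(\tilde{\alpha})$ and $\fix(\tilde{\alpha}) \subseteq \gamma(\aB)$ separately; the first is routine while the second occupies the bulk of the argument.

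For the first inclusion, the identities $S^*S = \one$, $SS^* = P$, and $S^*\gamma(b)S = b \tensor \one_{L^2(G)}$ immediately show that $\gamma$ is a faithful unital \Star-homomorphism from $\aB$ into $P\End(\hH_\aA \tensor \hH)P$. Since $b \tensor \one_{L^2(G)}$ trivially commutes with $\one_\aA \tensor \lambda_g$, equation~\eqref{eq:SOPcommuting} yields $\tilde{\alpha}_g(\gamma(b)) = \gamma(b)$ for all $g \in G$ (so norm-continuity is automatic), and a direct calculation with $W_S = S_{12}(W_G)_{23}S_{12}^*$ gives $\delta_\aD(\gamma(b)) = \gamma(b) \tensor \one_G$. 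Together with a routine check that $\gamma(b)\aD \subseteq \aD$ (using~\eqref{eq:SOPleft}--\eqref{eq:SOPright} and the $\alpha$-invariance of $\aB$) this places $\gamma(\aB)$ inside $\fix(\tilde{\alpha})$.

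For the converse, given $x \in \fix(\tilde{\alpha})$ I set $b := S^*xS \in \End(\hH_\aA \tensor L^2(G))$, so that $SbS^* = PxP = x$. Conjugating $\delta_\aD(x) = x \tensor \one_G$ by $S_{12}^*(\cdot)S_{12}$ transforms it into $(W_G)_{23}(b \tensor \one_G)(W_G)_{23}^* = b \tensor \one_G$; invoking the direct-integral decomposition $W_G = \int_G^{\oplus} r_g\,dg$ (already used in the proof of Lemma~\ref{lem:reconstructionCOISO}) this confines $b$ to $\End(\hH_\aA) \mathbin{\bar{\otimes}} L^\infty(G)$. Similarly, $\tilde{\alpha}_g(x) = x$ combined with~\eqref{eq:SOPcommuting} yields $\Ad[\one_\aA \tensor \lambda_g](b) = b$ for all $g \in G$, placing $b$ into $\End(\hH_\aA) \mathbin{\bar{\otimes}} R(G)$ with $R(G) := \lambda(G)'$. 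Since $L^\infty(G) \cap R(G) = \C \one_{L^2(G)}$, I conclude $b = b_0 \tensor \one$ for some $b_0 \in \End(\hH_\aA)$; and because $\Mul(\aD) = P\Mul(\aB \tensor \Com(\hH))P$ is pointwise fixed by $\Ad[u_g \tensor \one_\hH]$, applying~\eqref{eq:SOPequivariance} transfers this invariance to $u_g b_0 u_g^{-1} = b_0$ for every $g \in G$.

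The main obstacle is promoting $b_0$ from $\End(\hH_\aA)$ into $\aA$, which I plan to settle via a two-step bootstrap. First, using an approximate unit of $\aB \tensor \Com(\hH)$ of the form $\one_\aB \tensor k_\lambda$ (simultaneously an approximate unit of $\aA \tensor \Com(\hH)$ because $\one_\aA = \one_\aB$), a standard continuity argument shows that $x$, which lies in $\Mul(\aB \tensor \Com(\hH))$, in fact lies in $\Mul(\aA \tensor \Com(\hH))$. Second, from $SbS^* = x$, $SS^* = P$, and $xP = x$ I extract the identity $S^*x = (b_0 \tensor \one)S^*$, so that for every $z \in \aA \tensor \Com(\hH)$ one has $(b_0 \tensor \one)S^*z = S^*(xz) \in \aA \tensor \Com(\hH, L^2(G))$ by the adjoint of~\eqref{eq:SOPright}. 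Specialising to $z := S(a \tensor |g\rangle)(\one_\aA \tensor \langle f|) \in \aA \tensor \Com(\hH)$ for $a \in \aA$, $g \in L^2(G)$, $f \in \hH$, and noting $S^*z = a \tensor |g\rangle\langle f|$, one obtains $b_0 a \tensor |g\rangle\langle f| \in \aA \tensor \Com(\hH, L^2(G))$, forcing $b_0 a \in \aA$ for every $a \in \aA$. Taking $a = \one_\aA$ gives $b_0 \in \aA$, and combining with $u_g b_0 u_g^{-1} = b_0$ finally places $b_0 \in \aA^G = \aB$; hence $x = \gamma(b_0) \in \gamma(\aB)$.
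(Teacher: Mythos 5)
Your argument is correct and follows essentially the same route as the paper's: both directions proceed by compressing $x$ to $y = S^*xS$, using the commutation of $y \tensor \one_G$ with $(W_G)_{23}$ together with the $\lambda$-invariance coming from $\tilde{\alpha}_g(x)=x$ to force $y = b_0 \tensor \one_{L^2(G)}$, and then identifying $b_0$ as an element of $\aB$. The paper compresses the final steps into the single phrase ``as the coaction $\delta_G$ is ergodic, $y = b \tensor \one_G$ for some $b \in \aB$,'' so your explicit intersection argument $\bigl(\End(\hH_\aA)\bar{\tensor}L^\infty(G)\bigr)\cap\bigl(\End(\hH_\aA)\bar{\tensor}\lambda(G)'\bigr)=\End(\hH_\aA)\tensor\C\one$ and the two-step bootstrap promoting $b_0$ from $\End(\hH_\aA)$ into $\aA$ (hence, by $u$-invariance, into $\aB$) are a faithful and welcome unpacking of details the paper leaves to the reader rather than a different method.
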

\begin{proof} 
	We first establish that $\gamma$ is well-defined. 
	To do this, let $b \in \aB$ and let $x \in \aD$. 
	By~\eqref{eq:SOPleft} and~\eqref{eq:SOPright} we have $\gamma(b)x \in \aA \tensor \Com(\hH)$. 
	Combining~\eqref{eq:SOPequivariance} with the fact that $\aD$ is fixed under $\Ad[u_g \tensor \one_\hH]$, $g \in G$, further yields $\gamma(b)x \in \aB \tensor \Com(\hH)$
	Hence $\gamma(b)x \in \aD$, because $\gamma(b)x = S^*S \gamma(b)x S^*S$.
	In the same manner we see that $x\gamma(b) \in\aD$.
	In consequence, $\gamma(b) \in \Mul(\aD)$. 
	Since
	\begin{align*}
		\delta_\aD(\gamma(b)) &= \Ad[ S_{12} (W_G)_{23} S_{12}^* ] ( S_{12} ( b \tensor \one_G \tensor \one_G ) S_{12}^* )
		\\
		&= \Ad[ S_{12} (W_G)_{23} S_{12}^* S_{12} ] ( b \tensor \one_G \tensor \one_G) 
		\\
		&= \Ad[ S_{12}] (b \tensor \one_G \tensor \one_G) = \gamma(b) \tensor \one_G,
	\end{align*}
	\ie, $\gamma(b) \in \fix(\delta_\aD)$, and $\Ad[\one_\aA \tensor \mu_g](\gamma(b)) = \gamma(b)$ for all $g \in G$, which is due to~\eqref{eq:SOPcommuting}, it may finally be concluded that $\gamma(b) \in \tilde{\aA}$. 
	Note that we have actually proved more, namely that $\gamma(b)$ is fixed under $\tilde{\alpha}_g$, $g \in G$, which amounts to saying that $\gamma(\aB) \subseteq \tilde{\aA}^G$.
	We thus proceed to show that $\tilde{\aA}^G \subseteq \gamma(\aB)$. 
	For this, let $x \in \tilde{\aA}^G$.
	We put $y := S^* x S$ and observe that $(W_G)_{23} (y \tensor \one_G) = (y \tensor \one_G) (W_G)_{23}$ or, equivalently, $\id \tensor \delta_G (y \tensor \one_G) = y \tensor \one_G$. 
	As the coaction $\delta_G$ is ergodic, we can assert that $y = b \tensor \one_G$ for some $b \in \aB$. 
	It follows that $x = P x P= S y S^* = S (b \tensor \one_G) S^* = \gamma(b)$, \ie, $\tilde{\aA}^G \subseteq \gamma(\aB)$ as required. 
	That $\gamma$ is a faithful unital \Star-homomorphism is clear, and so the proof is complete.
\end{proof}

We proceed with a technical no-go result which is also of independent interest.
For its proof we make use of the fact that each C\Star-dynamical system $(\aA,G,\alpha)$ can be decomposed into its isotypic components, let's say, $A(\sigma)$, $\sigma \in \Irrep(G)$, which amounts to saying that their algebraic direct sum forms a dense \Star-subalgebra of~$\aA$ (see, \eg, \cite[Thm.~4.22]{HoMo06}). 

\begin{lemma}\label{lem:autosurj}
	Let $(\aA,G,\alpha)$ be a free C\Star-dynamical system with fixed point algebra $\aB$. 
	Furthermore, let $\aA_0$ be a $G$-invariant unital C\Star-subalgebra. 
	If the induced C\Star-dynamical system $(\aA_0,G,\alpha)$ is free and $\aA_0^G=\aB$, then $\aA_0 = \aA$.
\end{lemma}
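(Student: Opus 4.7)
The plan is to reduce the problem to the $G$-isotypic decomposition of $\aA$. Recall that the algebraic direct sum $\bigoplus_{\sigma \in \Irrep(G)} A(\sigma)$ is dense in $\aA$. Since $\aA_0$ is norm-closed, it is enough to establish the inclusion $A(\sigma) \subseteq \aA_0$ for every $\sigma \in \Irrep(G)$.

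Fix $\sigma$ and apply the freeness criterion recalled at the beginning of Section~\ref{sec:C*} to $(\aA_0,G,\alpha)$: this yields a finite-dimensional Hilbert space $\hH_\sigma$ and an isometry $s(\sigma) \in \aA_0 \tensor \End(V_\sigma,\hH_\sigma)$ satisfying $\alpha_g(s(\sigma)) = s(\sigma)(\one \tensor \sigma_g)$ for all $g \in G$. Writing $s(\sigma) = \sum_{i,j} a_{ji} \tensor |f_j\rangle\langle e_i|$ in fixed orthonormal bases $\{e_i\}$ of $V_\sigma$ and $\{f_j\}$ of $\hH_\sigma$, the isometry identity $s(\sigma)^*s(\sigma) = \one \tensor \one_{V_\sigma}$ becomes $\sum_j a_{jk}^* a_{jk'} = \delta_{kk'}\, \one$ and the equivariance becomes $\alpha_g(a_{ji}) = \sum_{i'} a_{ji'}\sigma_{i'i}(g)$, with all $a_{ji} \in \aA_0$.

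Now, given $b \in A(\sigma)$, I choose a $\sigma$-transforming family $b_1, \dots, b_{\dim V_\sigma} \in \aA$ with $\alpha_g(b_i) = \sum_k b_k \sigma_{ki}(g)$ and $b = b_{i_0}$ for some index $i_0$. The crux is to form the auxiliary elements $c_j := \sum_i a_{ji} b_i^* \in \aA$ for $j = 1,\dots,\dim\hH_\sigma$. A short computation combining the two transformation rules above with the unitarity of $\sigma$ shows that each $c_j$ is $G$-invariant, hence lies in $\aA^G = \aB$. At this point I invoke the hypothesis $\aA_0^G = \aB$, which places all the $c_j$ inside $\aA_0$. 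The orthonormality relation $\sum_j a_{jk}^* a_{ji_0} = \delta_{ki_0}\, \one$ then yields $\sum_j c_j^* a_{ji_0} = b$, exhibiting $b$ as an algebraic combination of elements of $\aA_0$, and hence $b \in \aA_0$.

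The only real subtlety is the construction and placement of the invariant elements $c_j$: the witness $s(\sigma)$ coming from the freeness of $\aA_0$ does the heavy lifting by letting one pass from the $\sigma$-orbit of $b$ in $\aA$ to invariant quantities, but these quantities live \emph{a priori} only in $\aA^G$. It is precisely the hypothesis $\aA_0^G = \aB$ that forces $c_j \in \aA_0$ and thereby closes the recovery formula inside $\aA_0$. I do not foresee any other obstacle; the remainder is bookkeeping with the combinatorics of a single irrep.
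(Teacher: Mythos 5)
Your proof is correct, but it takes a genuinely different route from the paper's. The paper argues via Hilbert-module machinery: freeness makes $A(\sigma)$ and $A_0(\sigma)$ into Morita equivalence bimodules over $\aB \tensor \End(V_\sigma)$ and the relative commutants $\aC$, $\aC_0$, and \cite[Lem.~A.1]{SchWa17} supplies finitely many $s_k \in A_0(\sigma)$ with $\sum_k \rprod{\aC_0}{s_k,s_k} = \one_{\aC}$; the imprimitivity relation $x \acts \rprod{\aC}{s_k,s_k} = \lprod{\aB \tensor \End(V_\sigma)}{x,s_k} \acts s_k$ then pulls any $x \in A(\sigma)$ into $A_0(\sigma)$. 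You instead unpack the freeness isometry $s(\sigma) \in \aA_0 \tensor \End(V_\sigma,\hH_\sigma)$ in matrix coordinates and run the averaging by hand; your invariant elements $c_j = \sum_i a_{ji} b_i^*$ are precisely the coordinate avatars of the $\aB$-valued inner products $\lprod{}{x,s_k}$, and the recovery formula $\sum_j c_j^* a_{ji_0} = b$ is the coordinate form of the fullness relation. What your version buys: it is elementary and self-contained (no Morita bimodule structure, no appeal to Lemma~A.1), and it never invokes freeness of the ambient system $(\aA,G,\alpha)$ --- only freeness of $(\aA_0,G,\alpha)$ and $\aA_0^G = \aA^G$ --- so it proves a marginally stronger statement. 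What the paper's version buys is brevity given machinery already developed in \cite{SchWa15,SchWa17}. One small imprecision to repair: a general $b \in A(\sigma)$ need not itself sit inside a single $\sigma$-transforming family as some $b_{i_0}$; rather $A(\sigma) \cong \Hom_G(V_\sigma,\aA) \tensor V_\sigma$ shows it is a \emph{finite linear combination} of members of such families, so you should add that by linearity of the containment $A(\sigma) \subseteq \aA_0$ it suffices to treat $b = b_{i_0}$. (The choice of convention for which irrep labels the isotypic component is likewise harmless, since freeness of $\aA_0$ furnishes an isometry for every $\sigma \in \Irrep(G)$.)
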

\begin{proof}
	We shall have established the lemma if we prove that the respective isotypic components are equal, \ie, $A_0(\sigma) = A(\sigma)$ for all $\sigma \in \Irrep(G)$.
	To prove the latter assertion, we fix $\sigma \in \Irrep(G)$ and recall that the freeness of $(\aA,G,\alpha)$ implies that $A(\sigma)$ is a Morita equivalence bimodule between the unital C\Star-algebras $\aB \tensor \End(V_\sigma)$ and 
	\begin{equation*}
		\aC := \{ x \in \aA \tensor \End(V_\sigma) : (\forall g \in G) \, (\one_\aA \tensor \sigma_g) \alpha_g(x) = x (\one_\aA \tensor \sigma_g) \}
	\end{equation*}
	(\cf~\cite[Sec. 3]{SchWa15}). 
	Likewise, the freeness of $(\aA_0,G,\alpha)$ implies that $A_0(\sigma)$ is a Morita equivalence bimodule between the unital C\Star-algebras $\aB \tensor \End(V_\sigma)$ and 
	\begin{equation*}
	 	\aC_0 := \{ x \in \aA_0 \tensor \End(V_\sigma) : (\forall g \in G) \, (\one_\aA \tensor \sigma_g) x = x (\one_\aA \tensor \sigma_g)\}.
	\end{equation*} 
	This makes it possible to apply \cite[Lem.~A.1]{SchWa17} which gives elements $s_1,\ldots,s_n \in A_0(\sigma)$ such that $\sum_{k=1}^n \rprod{\alg C_0}{s_k,s_k} = \one_{\aC}$.
	Since the inner products of $A_0(\sigma)$ are exactly the restrictions in domain and codomain of the respective inner products of $A(\sigma)$, we conclude that $\rprod{\aC_0}{s_k,s_k} = \rprod{\aC}{s_k,s_k}$ for all $1 \leq k \leq n$. 
	Hence for each $x \in A(\sigma)$ we have
	\begin{align*}
		x 
		= x \acts \one_{\aC} = \sum_{k=1}^n x \acts \rprod{\aC_0}{s_k,s_k} 
		= \sum_{k=1}^n x \acts \rprod{\aC}{s_k,s_k} 
		= \sum_{k=1}^n \lprod{\aB \tensor \End(V_\sigma)}{x,s_k} \acts s_k \in A_0(\sigma),
	\end{align*}
	\ie, $A(\sigma) \subseteq A_0(\sigma)$.
	This completes the proof, because the other inclusion is obvious.
\end{proof}

Now, we are almost in a position to state and prove the main result of this section.
The only preparatory point remaining concerns the following map:
\begin{align}
	j_\alpha : \aA \rightarrow \Cont(G,\aA) = \aA \tensor \Cont(G) \subseteq \End(\hH_\aA \tensor L^2(G)),
	&&
	j_\alpha(x)(g) = \alpha_{g^{-1}}(x) = u_g^* x u_g.
	\label{eq:jalpha}
\end{align}
It is clear that $j_\alpha$ is injective. 
Moreover, straightforward computations reveal that
\begin{align}
	\Ad[u_g \tensor r_g] \circ j_\alpha 
	&= j_\alpha  
	&& 
	\forall g \in G
	\label{eq:jalphacommuting},
	\\
	\Ad[\one_\aA \tensor \lambda_g] \circ j_\alpha 
	&= j_\alpha \circ \alpha_g  
	&& 
	\forall g \in G
	\label{eq:jalphaequivariance}.
\end{align}

\begin{theorem}\label{thm:pi_S}
	The map
	\begin{align}
		\pi_S : \aA \rightarrow \tilde{\aA} \subseteq\Mul(\aD) \subseteq \End(\hH_P),
		&&
		\pi_S(x) := S j_\alpha(x) S^* \label{eq:pi_S}
	\end{align}
	is a $G$-equivariant \Star-isomorphism.
\end{theorem}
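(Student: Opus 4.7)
The plan is to verify in sequence that $\pi_S$ is a bounded $*$-homomorphism into $\End(\hH_P)$ with image contained in $\tilde{\aA}$, that $\pi_S$ is $G$-equivariant and injective, and finally to deduce surjectivity via Lemma~\ref{lem:autosurj}.

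First, because $S$ is an isometry, $S^*S = \one$ and $PS = SS^*S = S$, so $\pi_S(xy) = S j_\alpha(x) S^* S j_\alpha(y) S^* = \pi_S(x) \pi_S(y)$ and $\pi_S(x^*) = \pi_S(x)^*$; hence $\pi_S$ is a bounded $*$-homomorphism whose image lies in $P \End(\hH_\aA \tensor \hH) P \cong \End(\hH_P)$. The essential work is to show $\pi_S(\aA) \subseteq \tilde{\aA}$, which splits into three verifications. $(i)$ For the multiplier property $\pi_S(x) \in \Mul(\aD)$, by Remark~\ref{rem:SOPright} the adjoint $S^*$ sends $\aA \tensor \Com(\hH)$ into $\aA \tensor \Com(\hH, L^2(G))$; combined with $j_\alpha(x) \in \aA \tensor C(G) \subseteq \Mul(\aA \tensor \Com(L^2(G)))$ and $S \in \Mul(\aA \tensor \Com(L^2(G) \oplus \hH))$, left multiplication by $\pi_S(x)$ sends $\aA \tensor \Com(\hH)$ into itself. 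Combining this with the commutation $(u_g \tensor \one_\hH) \pi_S(x) = \pi_S(x) (u_g \tensor \one_\hH)$, which follows from \eqref{eq:SOPequivariance} and \eqref{eq:jalphacommuting}, gives $\pi_S(x) \aD \subseteq \aD$, and the symmetric argument gives $\aD \pi_S(x) \subseteq \aD$. $(ii)$ For the fixed-point property $\pi_S(x) \in \fix(\delta_\aD)$, substitution and cancellation via $S^*S = \one$ yield
\begin{equation*}
\delta_\aD(\pi_S(x)) = S_{12} (W_G)_{23} j_\alpha(x)_{12} (W_G)_{23}^* S_{12}^*,
\end{equation*}
and since $j_\alpha(x) \in \aA \tensor C(G)$ acts by generalized multiplication on leg $2$ while $(W_G \eta)(g, h) = \eta(g, hg)$ does not alter functions of its first variable, $(W_G)_{23}$ commutes with $j_\alpha(x)_{12}$, so the right-hand side collapses to $S_{12} j_\alpha(x)_{12} S_{12}^* = \pi_S(x) \tensor \one_G$. $(iii)$ For the norm continuity, identities \eqref{eq:SOPcommuting} and \eqref{eq:jalphaequivariance} together give $\tilde{\alpha}_g(\pi_S(x)) = S j_\alpha(\alpha_g(x)) S^* = \pi_S(\alpha_g(x))$, whence the strong continuity of $\alpha$ does the job; the same identity also shows the $G$-equivariance of $\pi_S$.

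Injectivity is immediate, as $S^* \pi_S(x) S = j_\alpha(x)$ and $j_\alpha$ is injective. For surjectivity I would invoke Lemma~\ref{lem:autosurj}: the image $\pi_S(\aA)$ is a $G$-invariant unital C\Star-subalgebra of $\tilde{\aA}$, and $(\pi_S(\aA), G, \tilde{\alpha})$ is free by transport of structure from the free system $(\aA, G, \alpha)$. Moreover, $j_\alpha(b) = b \tensor \one_G$ for $b \in \aB$ gives $\pi_S|_\aB = \gamma$, hence
\begin{equation*}
\pi_S(\aA)^G = \pi_S(\aB) = \gamma(\aB) = \tilde{\aA}^G
\end{equation*}
by Lemma~\ref{lem:fixgamma}. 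With $(\tilde{\aA}, G, \tilde{\alpha})$ itself free by Corollary~\ref{cor:reconstruction}, Lemma~\ref{lem:autosurj} forces $\pi_S(\aA) = \tilde{\aA}$. The main obstacle I expect is step $(i)$: the multiplier bookkeeping across the three Hilbert spaces $L^2(G)$, $\hH$, and $L^2(G) \oplus \hH$ while tracking which compositions land in compacts versus multipliers.
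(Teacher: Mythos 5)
Your proposal is correct and follows essentially the same route as the paper: verify $\pi_S(x)\in\Mul(\aD)$ via \eqref{eq:SOPequivariance} and \eqref{eq:jalphacommuting}, obtain $\pi_S(x)\in\fix(\delta_\aD)$ from the commutation of $(W_G)_{23}$ with $j_\alpha(x)_{12}$, get membership in $\tilde{\aA}$ and $G$-equivariance from $\tilde{\alpha}_g\circ\pi_S=\pi_S\circ\alpha_g$, deduce injectivity from that of $j_\alpha$, and conclude surjectivity from Lemma~\ref{lem:autosurj} together with $\pi_S|_\aB=\gamma$ and Lemma~\ref{lem:fixgamma}. You in fact spell out some details (the multiplier bookkeeping and the $W_G$-commutation) that the paper leaves to the reader.
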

\begin{proof}
	We start again by proving that the map under consideration is well-defined. 
	For this purpose, let $x \in \aA$. 
	The verification of $\pi_S(x) \in \Mul(\aD)$ can be handled in much the same way as in the proof of Lemma~\ref{lem:fixgamma}, the only difference being that \eqref{eq:SOPequivariance} needs to be combined with \eqref{eq:jalphacommuting} to establish that $\pi_S(x) d, d \pi_S(x) \in \aB \tensor \Com(\hH)$ for all $d \in \aD$.
	Moreover, the identity $(W_G)_{23} j_\alpha(x)_{12} (W_G)_{23}^* = j_\alpha(x)_{12}$, which is easy to check, implies that
	\begin{align*}
		\delta_\aD(\pi_S(x)) &= \Ad[S_{12} (W_G)_{23} S_{12}^*] (S_{12} j_\alpha(x)_{12} S_{12}^*) 
		\\
		&= \Ad[S_{12}] ((W_G)_{23} j_\alpha(x)_{12} (W_G)_{23}^*) 
		\\
		&= \Ad[S_{12}] (j_\alpha(x)_{12})  = \pi_S(x)_{12},
	\end{align*}
	\ie, $\pi_S(x) \in \fix(\delta_\aD)$. 
	That, in fact, $\pi_S(x) \in \tilde{\aA}$ as claimed now follows from
	\begin{align}
		 \tilde{\alpha}_g (\pi_S(x)) 
		 \overset{\eqref{eq:SOPcommuting}}= 
		 S (\Ad[\one_\aA \tensor \lambda_g]( j_\alpha(x)) S^*
		 \overset{\eqref{eq:jalphaequivariance}}= 
		 \pi_S(\alpha_g(x)),
		 &&
		 g \in G.
		 \label{eq:pi_Sequivariant}
	\end{align}
	Of course,~\eqref{eq:pi_Sequivariant} also establishes that $\pi_S$ is $G$-equivariant, and so it remains to show that $\pi_S$ is a \Star-isomorphism. 
	Clearly, $\pi_S$ is a \Star-homomorphism. 
	Moreover, it is injective, because $j_\alpha$ is in jective and $S$ is an isometry.
	With this, we can assert that the induced C\Star-dynamical system $(\pi_S(\aA),G,\tilde{\alpha})$ is free with fixed point algebra $\pi_S(\aB) = \gamma(\aB) = \tilde{\aA}^G$ (\cf Lemma~\ref{lem:fixgamma} for the latter equalities). 
	Hence Lemma~\ref{lem:autosurj} implies that $\pi_S$ is surjective, \ie, $\pi_S(\aA) = \tilde{\aA}$, and therefore the proof is complete.
\end{proof}

\begin{remark}
	An alternative strategy for proofing Theorem~\ref{thm:pi_S} would be to establish that the isometries $S$ and $\tilde{S}$ are conjugated in the sense of~\cite[Thm.~4.4]{SchWa17}. 
	With some technical effort this can be done.
\end{remark}

\begin{corollary}[{\cf~\cite[Cor.~4.2]{SchWa21}}]\label{cor:pi_S}
	The pair $(\pi_S, \Ad[\one_\aA \tensor \mu])$ is a faithful generalized covariant representation of $(\aA,G,\alpha)$ on $\Mul(\aD) = P\Mul(\aB \tensor \Com(\hH)) P$. 
	Moreover, any faithful \Star-representation $\pi_\aB : \aB \to \End(\hH_\aB)$ gives rise to an honest faithful covariant representation on $P(\hH_\aB \tensor \hH)$.
\end{corollary}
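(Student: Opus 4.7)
The plan is to read off the first assertion directly from Theorem~\ref{thm:pi_S} and then, for the second, transport the whole setup to $\hH_\aB \tensor \hH$ via the unique multiplier extension of $\pi_\aB \tensor \id_{\Com(\hH)}$.

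For the first part, Theorem~\ref{thm:pi_S} already furnishes an injective \Star-homomorphism $\pi_S : \aA \to \Mul(\aD)$ with image $\tilde{\aA}$ satisfying
\begin{equation*}
\pi_S \circ \alpha_g = \tilde{\alpha}_g \circ \pi_S = \Ad[\one_\aA \tensor \mu_g] \circ \pi_S \qquad \forall g \in G,
\end{equation*}
where the second equality is simply the definition of $\tilde{\alpha}$. Thus $(\pi_S, \Ad[\one_\aA \tensor \mu])$ is by definition a faithful generalized covariant representation on $\Mul(\aD) = P\Mul(\aB \tensor \Com(\hH))P$, with no further work required.

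For the second part, I would first extend $\pi_\aB \tensor \id_{\Com(\hH)} : \aB \tensor \Com(\hH) \to \End(\hH_\aB \tensor \hH)$, which is faithful and nondegenerate since $\pi_\aB$ is, uniquely to a faithful unital \Star-representation $\rho : \Mul(\aB \tensor \Com(\hH)) \to \End(\hH_\aB \tensor \hH)$. Setting $\tilde{P} := \rho(P)$, the restriction of $\rho$ to $\Mul(\aD) = P \Mul(\aB \tensor \Com(\hH)) P$ is a faithful \Star-representation into $\tilde{P} \End(\hH_\aB \tensor \hH) \tilde{P}$, and hence a faithful \Star-representation on $\tilde{P}(\hH_\aB \tensor \hH)$. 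Composing with $\pi_S$ yields the desired faithful \Star-representation $\pi := \rho \circ \pi_S$ of $\aA$ on this Hilbert space.

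For covariance, I would record that $P$ commutes with $\one_\aA \tensor \mu_g$ for every $g \in G$ (as observed right after~\eqref{eq:D}), so that $\tilde{P}$ commutes with $\one_{\hH_\aB} \tensor \mu_g$ and the restriction $V_g := (\one_{\hH_\aB} \tensor \mu_g)|_{\tilde{P}(\hH_\aB \tensor \hH)}$ is a well-defined unitary; strong continuity of $V$ is inherited from that of $\mu$. Since $\rho$ intertwines $\Ad[\one_\aA \tensor \mu_g]$ on $\Mul(\aB \tensor \Com(\hH))$ with $\Ad[\one_{\hH_\aB} \tensor \mu_g]$, the equivariance of $\pi_S$ transports to $\pi \circ \alpha_g = \Ad[V_g] \circ \pi$, finishing the proof. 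The only delicate point --- and the step I expect to require the most care --- is verifying this multiplier-level intertwining of $\rho$ with the two adjoint actions, but this reduces by strict-to-strong continuity of the extension to the immediate check on elementary tensors.
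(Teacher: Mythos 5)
Your argument is correct and is precisely the intended derivation: the paper states this as an immediate corollary of Theorem~\ref{thm:pi_S}, with the first claim read off from the $G$-equivariant \Star-isomorphism $\pi_S$ and the second obtained by pushing $\Mul(\aD) = P\Mul(\aB \tensor \Com(\hH))P$ through the strictly continuous multiplier extension of the faithful nondegenerate representation $\pi_\aB \tensor \id_{\Com(\hH)}$ and cutting by the image of $P$. The only cosmetic caveat is that one should take $\pi_\aB$ nondegenerate (equivalently unital, since $\aB$ is unital), or first pass to the essential subspace, so that the multiplier extension exists and is faithful.
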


\begin{corollary}[{\cf~\cite[Thm.~10]{Wass89}}]\label{cor:cleft}
	Let $(\aA,G,\alpha)$ be a cleft C\Star-dynamical system (\cf~Remark~\ref{rem:cleft}).
	Then $(\aA,G,\alpha)$ can be realized as the invariants of an equivariant coaction of $G$ on $\aB \tensor \Com(L^2(G))$.
	Moreover, any faithful \Star-representation $\pi_\aB : \aB \to \End(\hH_\aB)$ gives rise to an honest faithful covariant representation on $\hH_\aB \tensor L^2(G)$.
\end{corollary}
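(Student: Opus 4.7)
The plan is to specialize Theorem~\ref{thm:pi_S} and Corollary~\ref{cor:pi_S} to the cleft setting. By Remark~\ref{rem:cleft}, in the cleft case one may take $\mu := \lambda$ and $\hH := L^2(G)$, so the ambient algebra $\aB \tensor \Com(\hH)$ appearing in Theorem~\ref{thm:pi_S} already equals $\aB \tensor \Com(L^2(G))$. The additional ingredient required is that the isometry $S$ furnished by Lemma~\ref{lem:coisometry} can in fact be chosen to be a genuine \emph{unitary}, so that the projection $P = SS^*$ collapses to the identity on $\hH_\aA \tensor L^2(G)$ and the corner $\aD = P(\aB \tensor \Com(L^2(G)))P$ simply equals $\aB \tensor \Com(L^2(G))$.

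Concretely, given a cleft structure witnessed by a unitary $U \in \Mul(\aA \tensor C^*_r(G))$ with $\bar{\alpha}_g(U) = U(\one_\aA \tensor r_g)$, I would set $S := U$, regarded as an operator on $\hH_\aA \tensor L^2(G)$ via the faithful nondegenerate covariant representation $(\pi,u)$. The equivariance property~\eqref{eq:SOPequivariance} is then the cleftness relation rewritten; the commutation~\eqref{eq:SOPcommuting} with $\one_\aA \tensor \lambda_g$ follows from $U \in \Mul(\aA \tensor C^*_r(G))$ together with the fact that $C^*_r(G)$ is the fixed point algebra of $\Com(L^2(G))$ under $\Ad[\lambda_g]$ and therefore commutes with each $\lambda_g$; and the mapping property~\eqref{eq:SOPleft} follows from $U \in \Mul(\aA \tensor C^*_r(G))$ by means of Lemma~\ref{lem:multipliers}. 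Since $U$ is unitary, $P = SS^* = \one$ on the nose.

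With $P = \one$, Theorem~\ref{thm:pi_S} directly produces a $G$-equivariant \Star-isomorphism $\pi_S : \aA \to \tilde{\aA}$, where $\tilde{\aA}$ is the norm-continuous part of the fixed point algebra of the coaction $\delta_\aD : \aB \tensor \Com(L^2(G)) \to \Mul(\aB \tensor \Com(L^2(G)) \tensor C^*_r(G))$ constructed in Lemma~\ref{lem:deltaD} via $W_S = S_{12}(W_G)_{23}S_{12}^*$. This realizes $(\aA,G,\alpha)$ as the invariants of an equivariant coaction of $G$ on $\aB \tensor \Com(L^2(G))$, giving the first assertion. The representation statement then follows verbatim from Corollary~\ref{cor:pi_S}, since $P(\hH_\aB \tensor L^2(G)) = \hH_\aB \tensor L^2(G)$.

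The main technical obstacle I anticipate is the rigorous passage in~\eqref{eq:SOPleft} from the multiplier condition over $\aA \tensor C^*_r(G)$ to the corresponding multiplier property over $\aA \tensor \Com(L^2(G))$. Although it is morally clear that $U$ multiplies $\aA \tensor \Com(L^2(G))$ into itself---via the commutation with the $\lambda_g$'s combined with the density of the image of $C^*_r(G) \cdot L^\infty(G)$-type elements in $\Com(L^2(G))$---the cleanest argument likely proceeds through Lemma~\ref{lem:multipliers} and the strict-topology extension of $\bar\alpha$; this is the one step where I would need to check the interplay of the various closures and extensions most carefully.
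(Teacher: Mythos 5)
Your proposal is correct and follows the route the paper intends: the corollary is stated as a direct specialization of Theorem~\ref{thm:pi_S} and Corollary~\ref{cor:pi_S}, with Remark~\ref{rem:cleft} supplying exactly the choices $\mu = \lambda$, $\hH = L^2(G)$, and the key point being that $S$ can be taken to be the cleft unitary $U$ itself, so that $P = \one$ and the corner $\aD$ is all of $\aB \tensor \Com(L^2(G))$. The multiplier step you flag is handled most cleanly by noting that $\aA \tensor C^*_r(G) \subseteq \aA \tensor \Com(L^2(G))$ is a nondegenerate inclusion, whence $\Mul(\aA \tensor C^*_r(G)) \subseteq \Mul(\aA \tensor \Com(L^2(G)))$.
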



\bibliographystyle{abbrv}
\bibliography{short,RS}

\end{document}